\documentclass[]{amsart}
\usepackage{amssymb}


 \newtheorem{Theorem}{Theorem}[section]
 \newtheorem{Corollary}[Theorem]{Corollary}
 \newtheorem{Lemma}[Theorem]{Lemma}

 \newtheorem{Conjecture}[Theorem]{Conjecture}

 \newtheorem{Remark}[Theorem]{Remark}

 \numberwithin{equation}{section}



\begin{document}

\title
 {A weighted version of Saitoh's conjecture}

\author{Qi'an Guan}
\address{Qi'an Guan: School of Mathematical Sciences,
Peking University, Beijing, 100871, China.}
\email{guanqian@math.pku.edu.cn}

\author{Zheng Yuan}

\address{Zheng Yuan: School of Mathematical Sciences,
Peking University, Beijing, 100871, China.}
\email{zyuan@pku.edu.cn}

\thanks{}

\subjclass[2020]{30H10 30H20 31C12 30E20}

\keywords{Bergman kernel, conjugate Hardy $H^2$ kernel, Szeg\"o kernel, Saitoh's conjecture, concavity property}

\date{\today}

\dedicatory{}

\commby{}


\begin{abstract}
In this article, we prove a weighted version of Saitoh's conjecture. As an application, we prove a weighted version of Saitoh's conjecture for higher derivatives. 
\end{abstract}

\maketitle

\section{Introduction}

Let $D$ be a planar regular region with $n$ boundary components which are analytic Jordan curves (see \cite{saitoh}, \cite{yamada}). 
Let $H^{(c)}_2(D)$ (see \cite{saitoh}) denote the analytic Hardy class on $D$ defined as the set of all analytic functions $f(z)$ on $D$ such that the subharmonic functions $|f(z)|^2$ have harmonic majorants $U(z)$: 
$$|f(z)|^2\le U(z)\,\, \text{on}\,\,D.$$
Then each function $f(z)\in H^{(c)}_2(D)$ has Fatou's nontangential boundary value a.e. on $\partial D$ belonging to $L^2(\partial D)$ (see \cite{duren}). 

Kernel functions associated with various norms have been shown to play a fundamental role in several branches of mathematical analysis (see \cite{Berg70,nehari2}). Let us recall  two reproducing kernels on $D$.

Let $\lambda$ be a positive continuous function on $\partial D$. 
We call  $K_{\lambda}(z,\overline w)$ (see \cite{nehari}) the weighted Szeg\"o kernel if 
$$f(w)=\frac{1}{2\pi}\int_{\partial D}f(z)\overline{K_{\lambda}(z,\overline w)}\lambda(z)|dz|$$
holds for any $f\in H^{(c)}_2(D)$. Let $G_D(p,t)$ be the Green function on $D$, and let $\partial/\partial v_p$ denote the derivative along the outer normal unit vector $v_p$. Fixed $t\in D$, $\frac{\partial G_D(p,t)}{\partial v_p}$ is  positive and continuous on $\partial D$ because of the analyticity of the boundary (see \cite{saitoh}, \cite{guan-19saitoh}). When $\lambda(p)=\left(\frac{\partial G_D(p,t)}{\partial v_p}\right)^{-1}$ on $\partial D$, $\hat K_t(z,\overline w)$ denotes $K_{\lambda}(z,\overline w)$, which is the so-called conjugate Hardy $H^2$ kernel on $D$ (see \cite{saitoh}).
 When $t=w$ and $z=w$, $\hat K(z)$ denotes $\hat K_t(z,\overline w)$ for simplicity.

 Let $\rho$ be a positive Lebesgue measurable function on $D$, which satisfies that there exists $a_{U}>0$ such that $\rho^{-{a_U}}\in L^1(U)$ for any open subset $U\Subset D\backslash Z$, where $Z$ is a discrete subset of $D$. $B_{\rho}(z,\overline w)$ denotes the weighted Bergman kernel on $D$ with the weight $\rho$ (see \cite{pasternak}) if 
 $$f(w)=\int_Df(z)\overline{B_{\rho}(z,\overline w)}\rho(z)$$
 holds for any holomorphic function $f$ on $D$ satisfying $\int_D|f(z)|^2<+\infty$. Denote that
 $$B_{\rho}(z):=B_{\rho}(z,\overline z).$$
 When $\rho\equiv1$, $B(z)$ denotes $B_{\rho}(z)$ for simplicity.

Let $c_\beta(z)$ be the logarithmic capacity which is defined by 
$$c_{\beta}(z):=\exp\lim_{w\rightarrow z}(G_D(w,z)-\log|w-z|).$$
   In \cite{yamada}, Yamada listed the following conjectures on $c_{\beta}(z)$, $B(z)$ and $\hat K(z)$.
 \begin{Conjecture}
 	If $n>1$, then 
 	\begin{equation}
 		\label{eq:0714a}c_{\beta}(z)^2<\pi B(z)<\hat K(z).
 	\end{equation}
 \end{Conjecture}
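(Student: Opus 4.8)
The plan is to realize $c_\beta(z)^2$, $\pi B(z)$ and $\hat K(z)$ as the three ``boundary values'' of one family of minimal $L^2$ integrals governed by the Green function at the diagonal point $z$, and then to extract both inequalities from a single concavity property of that family. Fix $z\in D$ and, writing $d\lambda$ for Lebesgue measure, set
$$
\mathcal{G}(t):=\inf\Big\{\,\int_{\{2G_D(\cdot,z)<-t\}}|f|^2\,d\lambda\,:\,f\in\mathcal{O}(\{2G_D(\cdot,z)<-t\}),\ f(z)=1\,\Big\},\qquad t\ge 0.
$$
I would first identify its three relevant values. At $t=0$ the sublevel set is all of $D$ and the extremal problem is exactly dual to the Bergman kernel, so $\mathcal{G}(0)=1/B(z)$. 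As $t\to+\infty$ the sublevel set shrinks to $z$; using the expansion $G_D(w,z)=\log|w-z|+\log c_\beta(z)+o(1)$ it is comparable to a disc of radius $e^{-t/2}/c_\beta(z)$ on which the minimizer tends to the constant $1$, whence $\lim_{t\to+\infty}e^{t}\mathcal{G}(t)=\pi/c_\beta(z)^2$. Finally, comparing $\mathcal{G}(t)$ with the Bergman extremal $f_0$ (which is admissible on every smaller sublevel set) and applying the coarea formula along the level curves $\{2G_D(\cdot,z)=-t\}$ gives $-\mathcal{G}'(0^+)\ge \frac{1}{2}\int_{\partial D}|f_0|^2\big(\partial G_D(\cdot,z)/\partial v\big)^{-1}|dz|$; since the weight here is precisely $\lambda$ and $f_0(z)=1$, the extremal characterization of the weighted Szeg\"o (conjugate Hardy) kernel yields $-\mathcal{G}'(0^+)\ge \pi/\hat K(z)$.

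The core of the argument is the concavity property: after the substitution $s=e^{-t}$, the function $\widetilde{\mathcal{G}}(s):=\mathcal{G}(-\log s)$ is concave on $(0,1]$ with $\widetilde{\mathcal{G}}(0^+)=0$. Granting this, concavity together with $\widetilde{\mathcal{G}}(0)=0$ makes $s\mapsto\widetilde{\mathcal{G}}(s)/s$ nonincreasing and forces $\widetilde{\mathcal{G}}(s)/s\ge\widetilde{\mathcal{G}}'(s)$ for every $s$. Translating the three values above through $s=e^{-t}$ (so that $\widetilde{\mathcal{G}}(1)=1/B(z)$, $\lim_{s\to 0^+}\widetilde{\mathcal{G}}(s)/s=\pi/c_\beta(z)^2$ and $\widetilde{\mathcal{G}}'(1)=-\mathcal{G}'(0^+)$) and evaluating at $s=1$ gives the chain
$$
\frac{\pi}{c_\beta(z)^2}\ \ge\ \frac{1}{B(z)}\ \ge\ \frac{\pi}{\hat K(z)},
$$
which is exactly $c_\beta(z)^2\le \pi B(z)\le \hat K(z)$.

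It remains to make both inequalities strict when $n>1$, and this is where I expect the real difficulty to lie. Equality in the left inequality forces $\widetilde{\mathcal{G}}(s)/s$ to be constant, and equality in the right one forces both linearity of $\widetilde{\mathcal{G}}$ and the coincidence of the Bergman extremal $f_0$ with the Szeg\"o extremal function; in either case $\widetilde{\mathcal{G}}$ is linear on $[0,1]$. By the characterization of the equality case in the concavity property --- equivalently, the sharpness case of the underlying optimal $L^2$ extension theorem --- this occurs only when $D$ is conformally equivalent to the unit disc with a relatively closed polar subset removed, i.e.\ only when $n=1$. Since $D$ has $n>1$ analytic boundary curves it is not of this form, so both inequalities are strict. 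The two genuinely hard points are therefore establishing the concavity of $\widetilde{\mathcal{G}}$ with the sharp constant (which rests on an Ohsawa--Takegoshi-type extension with optimal estimate) and controlling its equality case tightly enough to rule out every multiply connected $D$.
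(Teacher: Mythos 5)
Your plan is essentially the paper's own route: the right-hand inequality is the specialization $\varphi\equiv 0$, $c\equiv 1$, $\psi=G_D(\cdot,z)$ of the proof of Theorem \ref{main theorem} (concavity of $G(h^{-1}(r))$, Fatou plus the coarea formula to pass from the derivative of $G$ at $0$ to the boundary integral, then Cauchy--Schwarz against the Szeg\"o kernel), the left-hand inequality is the Guan--Zhou optimal extension read off as the chord slope of $\widetilde{\mathcal{G}}$ as $s\to 0^+$, and strictness in both cases reduces via the linearity characterization (Theorem \ref{thm:m-points}) to $\chi_{z}\equiv 1$, i.e.\ $n=1$. The one substantive step you elide is that the Bergman minimizer $f_0$ must be shown to belong to $H_2^{(c)}(D)$ before the Szeg\"o reproducing formula may be applied to it on $\partial D$; this is exactly what Lemma \ref{l:2} supplies, using the same finiteness of $\liminf_{r\to 1^-}(1-r)^{-1}\int_{\{2G_D(\cdot,z)\ge\log r\}}|f_0|^2$ that your concavity argument already produces.
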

The left part of inequality \eqref{eq:0714a} is so-called Suita conjecture (see \cite{suita}) and the right part of inequality \eqref{eq:0714a} is so-called Saitoh's conjecture (see \cite{saitoh}).
 
 The original form of Suita conjecture (see \cite{suita}) was posed on open Riemann surfaces admitted nontrivial Green functions. B\l ocki \cite{Blo13} proved the ``$\le $'' part of Suita conjecture  on bounded planar domains. Guan-Zhou \cite{gz12} proved the ``$\le $'' part of Suita conjecture  on open Riemann surfaces. In \cite{guan-zhou13ap}, Guan-Zhou proved a necessary and sufficient condition of the holding of $c_{\beta}(z)^2=\pi B(z)$  on open Riemann surfaces, which completed the proof of Suita conjecture.

In \cite{guan-19saitoh}, Guan proved Saitoh's conjecture:

\begin{Theorem}
	[\cite{guan-19saitoh}]\label{thm:saitoh}
	If $n>1$, then $\hat K(z)>\pi B(z)$.
\end{Theorem}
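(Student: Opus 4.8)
The plan is to recast both kernels as reciprocals of minimal $L^2$-integrals at the base point and then compare them through a one-parameter concavity argument anchored at the Green function. Fix $z_0\in D$, write $G:=G_D(\cdot,z_0)$, and recall that $\hat K(z_0,\overline{z_0})=\sup\{|f(z_0)|^2/\|f\|^2\}$ by the weighted Szeg\"o reproducing property, with the analogous statement for $B$. Thus the theorem is equivalent to a strict inequality between the two minimal integrals
\[
\frac{1}{\hat K(z_0)}=\frac{1}{2\pi}\inf_{f(z_0)=1}\int_{\partial D}|f|^2\lambda\,|dz|,
\qquad
\frac{1}{\pi B(z_0)}=\frac{1}{\pi}\inf_{f(z_0)=1}\int_D|f|^2,
\]
the infima being taken over holomorphic $f$ normalized by $f(z_0)=1$. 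I must show the left quantity is strictly smaller when $n>1$.

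First I would build a bridge between the interior and boundary integrals through the sublevel sets of $G$. Writing the coarea decomposition along the level curves $\{G=-\tau\}$ gives $\int_D|f|^2=\int_0^{\infty}\big(\int_{\{G=-\tau\}}|f|^2/|\nabla G|\,d\mathcal H^1\big)\,d\tau$, while the level-curve integral has boundary limit
\[
\lim_{\tau\to 0^+}\int_{\{G=-\tau\}}\frac{|f|^2}{|\nabla G|}\,d\mathcal H^1=\int_{\partial D}|f|^2\lambda\,|dz|,
\]
precisely because $G\equiv 0$ on $\partial D$ forces $|\nabla G|=\partial G/\partial v$ there, and $\lambda=(\partial G/\partial v)^{-1}$. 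Hence the Hardy norm is the boundary datum of exactly the family of level integrals whose $\tau$-integral is the Bergman norm; this is the structural observation that lets the two extremal problems be read off one object.

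The analytic core is the concavity property. Setting $\psi:=2G$, I would prove that
\[
\mathcal A(s):=\inf_{f(z_0)=1}\int_{\{\psi<-s\}}|f|^2
\]
is a concave function of $r:=e^{-s}$ on $(0,1]$ with $\mathcal A\to 0$ as $r\to 0^+$; this is the regime where the optimal $L^2$-extension theorem (equivalently the concavity of minimal $L^2$-integrals) applies to the weight $\psi=2G$. At the endpoint $\mathcal A(1)=1/B(z_0)$, while the envelope theorem combined with the coarea formula identifies the one-sided derivative as $\frac{d\mathcal A}{dr}\big|_{r=1}=\tfrac12\int_{\partial D}|f^\ast|^2\lambda\,|dz|$, with $f^\ast$ the Bergman minimizer. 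Concavity on $(0,1]$ together with $\mathcal A(0^+)=0$ forces the tangent at $r=1$ to lie above the origin, i.e. $\frac{d\mathcal A}{dr}\big|_{r=1}\le \mathcal A(1)$, which reads $\tfrac12\int_{\partial D}|f^\ast|^2\lambda\,|dz|\le 1/B(z_0)$. Since $\inf_{f(z_0)=1}\int_{\partial D}|f|^2\lambda\,|dz|\le\int_{\partial D}|f^\ast|^2\lambda\,|dz|$, this yields $1/\hat K(z_0)\le 1/(\pi B(z_0))$, the non-strict inequality $\hat K\ge\pi B$.

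The main obstacle, and the entire source of strictness, is the equality analysis of the concavity step. Equality in the tangent inequality above would mean the tangent line at $r=1$ coincides with the chord from the origin, which for a concave function vanishing at $0$ forces $\mathcal A(r)$ to be globally affine on $(0,1]$. I would then invoke the characterization of the linearity case in the concavity theorem: affinity of $\mathcal A$ forces $e^{-\psi}=e^{-2G}$ to agree with $|F|^2$ for a single-valued holomorphic $F$, which requires $G$ to admit a globally single-valued harmonic conjugate, i.e. the conjugate differential $*dG$ to have no periods around the holes of $D$. For a regular region with $n$ analytic boundary curves this holds exactly when $n=1$ — the same rigidity that governs the equality case of the Suita conjecture (cf. \cite{guan-zhou13ap}). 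Consequently, when $n>1$ the concavity is strict at $r=1$ and $\hat K(z)>\pi B(z)$. Establishing this rigidity, rather than the comparison inequality itself, is where the real work lies.
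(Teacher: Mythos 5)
Your proposal follows essentially the same route as the paper: recast $1/(\pi B)$ and $1/\hat K$ as minimal $L^2$-integrals, use the concavity of $t\mapsto G(h^{-1}(r))$ for $\psi=2G_D(\cdot,z_0)$ together with the coarea formula along level curves of the Green function to compare the chord slope $\mathcal A(1)$ with the boundary integral of the Bergman minimizer, and derive strictness for $n>1$ from the rigidity of the linearity case via the nontriviality of the character $\chi_{z_0}$ (Suita's observation that $\chi_{z_0}\equiv 1$ iff $n=1$). The one step you elide is verifying that the Bergman minimizer actually lies in $H^{(c)}_2(D)$ so that its nontangential boundary values exist and it is admissible in the Szeg\"o extremal problem (the paper's Lemma \ref{l:2}); your concavity estimate supplies precisely the finiteness hypothesis that lemma requires, so the gap is routine to fill.
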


 We recall some notations (see \cite{OF81}, see also \cite{guan-zhou13ap,GY-concavity,GMY-concavity2}).
 Let $p:\Delta\rightarrow D$ be the universal covering from unit disc $\Delta$ to $ D$, and let $z_0\in D$.
 We call the holomorphic function $f$ on $\Delta$ a multiplicative function,
 if there is a character $\chi$, which is the representation of the fundamental group of $ D$, such that $g^{*}f=\chi(g)f$,
 where $|\chi|=1$ and $g$ is an element of the fundamental group of $ D$. Denote the set of such kinds of $f$ by $\mathcal{O}^{\chi}( D)$.

It is known that for any harmonic function $u$ on $ D$,
there exists a $\chi_{u}$ and a multiplicative function $f_u\in\mathcal{O}^{\chi_u}( D)$,
such that $|f_u|=p^{*}\left(e^{u}\right)$.
If $u_1-u_2=\log|f|$, then $\chi_{u_1}=\chi_{u_2}$,
where $u_1$ and $u_2$ are harmonic functions on $ D$ and $f$ is a holomorphic function on $ D$.
Recall that for the Green function $G_{ D}(z,z_0)$,
there exist a $\chi_{z_0}$ and a multiplicative function $f_{z_0}\in\mathcal{O}^{\chi_{z_0}}( D)$ such that $|f_{z_0}(z)|=p^{*}\left(e^{G_{ D}(z,z_0)}\right)$. $D$ is conformally equivalent to the unit disc (i.e. $n=1$) if and only if $\chi_{z_0}\equiv1$ (see \cite{suita}).

Let $u$ be a harmonic function on $D$, and let $\rho=e^{-2u}$. In \cite{yamada}, Yamada posed the following weighted version of Suita conjecture, which is so-called extended Suita conjecture.
\begin{Conjecture}
	$c_{\beta}^2(z_0)\le \pi\rho(z_0)B_{\rho}(z_0)$, and equality holds if and only if $\chi_{z_0}=\chi_{-u}$.
\end{Conjecture}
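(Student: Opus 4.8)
The plan is to reduce the inequality to the extremal characterization of the weighted Bergman kernel and then run the concavity method on sublevel sets of the Green function. Since $\rho=e^{-2u}$ is smooth and positive, the reproducing property yields the variational formula $\tfrac{1}{B_\rho(z_0)}=\inf\{\int_D|f|^2e^{-2u}:f\in\mathcal{O}(D),\ f(z_0)=1\}$, so it suffices to produce a competitor $f$ with $\int_D|f|^2e^{-2u}\le \pi e^{-2u(z_0)}/c_\beta^2(z_0)$ and to decide when the bound is attained. First I would introduce, for $t\ge 0$, the minimal $L^2$ integral
\[
\mathcal{G}(t):=\inf\Big\{\int_{\{G_D(\cdot,z_0)<-t\}}|f|^2e^{-2u}:\ f\in\mathcal{O}(\{G_D(\cdot,z_0)<-t\}),\ f(z_0)=1\Big\},
\]
in which the Green function $G_D(\cdot,z_0)$ governs the truncation and $e^{-2u}$ is the weight in the integrand.

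The two boundary values of $\mathcal{G}$ carry all the geometric content. Because $G_D(\cdot,z_0)<0$ throughout $D$ one has $\{G_D<0\}=D$, so the extremal property gives $\mathcal{G}(0)=1/B_\rho(z_0)$. For the other end I would use the defining asymptotics of the logarithmic capacity: in a local coordinate centred at $z_0$, $G_D(z,z_0)=\log|z-z_0|+\log c_\beta(z_0)+o(1)$, hence $\{G_D<-t\}$ is, to leading order, the disc of radius $e^{-t}/c_\beta(z_0)$. On this shrinking disc the weight is $e^{-2u(z_0)}(1+o(1))$ and the minimizer tends to the constant $1$, whence $\lim_{t\to+\infty}e^{2t}\mathcal{G}(t)=\pi e^{-2u(z_0)}/c_\beta^2(z_0)=\pi\rho(z_0)/c_\beta^2(z_0)$.

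Now I would invoke the concavity property of minimal $L^2$ integrals (cf.\ \cite{GY-concavity,GMY-concavity2}): $\mathcal{G}$ is a concave function of the variable $r=e^{-2t}\in(0,1]$, with $\mathcal{G}(r)\to 0$ as $r\to 0^+$. For a concave function vanishing at $r=0$ the difference quotient $\mathcal{G}(r)/r$ is nonincreasing, so its value at $r=1$ is at most its limit as $r\to 0^+$; that is,
\[
\frac{1}{B_\rho(z_0)}=\mathcal{G}(1)\le \lim_{r\to0^+}\frac{\mathcal{G}(r)}{r}=\frac{\pi\rho(z_0)}{c_\beta^2(z_0)},
\]
which is exactly $c_\beta^2(z_0)\le\pi\rho(z_0)B_\rho(z_0)$. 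Moreover equality holds throughout this chain if and only if $\mathcal{G}$ is linear in $r$.

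The hard part is the equality characterization, for which I would use the rigidity (equality) statement of the concavity theorem. Linearity of $\mathcal{G}$ forces the global minimizer $f_0$ of $\mathcal{G}(0)$ to remain extremal on every sublevel set $\{G_D<-t\}$, and this rigidity should be equivalent to the existence of a single-valued holomorphic object on $D$ that simultaneously encodes $G_D(\cdot,z_0)$ and the weight. Concretely, on the universal cover $p:\Delta\to D$ one has $|f_{z_0}|=p^{*}(e^{G_D(\cdot,z_0)})$ with character $\chi_{z_0}$ and $|f_{-u}|=p^{*}(e^{-u})$ with character $\chi_{-u}$, and the extremal configuration is governed by the quotient $f_{z_0}/f_{-u}$, which has character $\chi_{z_0}\chi_{-u}^{-1}$ and therefore descends to a single-valued holomorphic function $h$ on $D$ precisely when $\chi_{z_0}=\chi_{-u}$. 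When the characters agree, $f_0=h'/h'(z_0)$ provides a global competitor, and a residue computation (using $|h'(z_0)|=c_\beta(z_0)e^{u(z_0)}$) confirms it attains the value $\pi\rho(z_0)/c_\beta^2(z_0)$; when they differ, no such single-valued $h$ exists and every competitor is strictly worse. The delicate points I expect to fight with are the exact form of the rigidity statement and the verification that the descended quotient realizes the extremal value; the inequality itself, by contrast, reduces to the two limiting computations above once the concavity theorem is in hand.
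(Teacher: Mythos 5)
The paper does not actually prove this statement: it is recorded as Yamada's conjecture and the text immediately notes that it was proved by Guan--Zhou in \cite{guan-zhou13ap} (via the optimal-constant $L^2$ extension theorem), so there is no in-paper proof to compare against. Your proposal is nevertheless essentially the modern route to the result, and it is precisely the machinery this paper recalls in Section 2.2: take $\psi=2G_D(\cdot,z_0)$, $\varphi=2u$, $c\equiv 1$, $\mathcal{F}_{z_0}$ the maximal ideal and $f\equiv 1$; then your $\mathcal{G}$ is the paper's $G(t)$, Theorem \ref{thm:general_concave} gives concavity in $r=e^{-2t}$, the constant competitor on the shrinking sublevel sets gives $\liminf_{r\to0^+}\mathcal{G}(r)/r\le\pi\rho(z_0)/c_\beta^{2}(z_0)$ (which is all the inequality needs), and Theorem \ref{thm:m-points} converts linearity into $\chi_{z_0}=\chi_{-u}$ (its condition $(2)$ is automatic here: $g$ must be nonvanishing because $u-\tilde u=\log|g|$ with $u,\tilde u$ harmonic, and then $\chi_{-\tilde u}=\chi_{-u}$; condition $(b)$ holds with $a=0$ since $u$ is harmonic). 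Two points to tighten. First, for the inequality you only need the upper bound on $e^{2t}\mathcal{G}(t)$, i.e. the constant competitor; the matching lower bound, if you want the genuine limit, follows from the sub-mean-value inequality applied to $|f|^{2}e^{-2u}=|fe^{-(u+\sqrt{-1}\tilde u)}|^{2}$, which is locally the modulus squared of a holomorphic function. Second, your candidate extremal function is not the right one: by Remark \ref{rem:1.1} the extremal is $c_0\,(p_*(f_{z_0}))'\,p_*(f_u)$, whereas differentiating the descended product $h=p_*(f_{z_0}f_u)$ produces the extra term $f_{z_0}f_u'$, and with your normalization $|h'(z_0)|=c_\beta(z_0)e^{u(z_0)}$ one does not get $\int_D|f_0|^{2}e^{-2u}=\pi\rho(z_0)/c_\beta^{2}(z_0)$. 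Finally, be aware that the entire weight of the ``only if'' direction (non-matching characters force strict inequality) sits inside the linearity characterization of Theorem \ref{thm:m-points}, which you invoke as a black box; that is legitimate insofar as the paper also only cites it, but it is the genuinely hard step rather than a routine rigidity statement.
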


In \cite{guan-zhou13ap}, Guan-Zhou proved the extended Suita conjecture.   More general weighted versions  of Suita conjecture can be referred to \cite{GM,GY-concavity}, and a weighted version of Suita conjecture for higher derivatives  can be referred to \cite{GMY-concavity2}.

In the present article, we consider weighted versions of Saitoh's conjecture.

\subsection{Main result}\label{sec:main}

Let $D$ be a planar regular region with $n$ boundary components which are analytic Jordan curves, and let $z_0\in D$. 

Let $\psi$ be a Lebesgue measurable function on  $\overline D$, which satisfies that $\psi$ is  subharmonic on $D$, $\psi|_{\partial D}\equiv 0$
 and the Lelong number $v(dd^c\psi,z_0)>0$, where $d^c=\frac{\partial-\bar\partial}{2\pi\sqrt{-1}}$. Assume that $\psi\in C^1(U\cap\overline{D})$ for an open neighborhood $U$ of $\partial D$ and $\frac{\partial\psi}{\partial v_p}$ is positive on $\partial D$, where $\partial/\partial v_p$ denotes the derivative along the outer normal unit vector $v_p$. Assume that  one of the following two statements holds:
 
  $(a)$ $(\psi-p_0G_{D}(\cdot,z_0))(z_0)>-\infty$, where $p_0=v(dd^c(\psi),z_0)>0$;
  
  $(b)$ $\varphi+2a\psi$ is subharmonic near $z_0$ for some $a\in[0,1)$.

 Let $\varphi$ be a Lebesgue measurable function on $\overline D$ satisfying that $\varphi+2\psi$ is subharmonic on $D$, the Lelong number 
 $$v(dd^c(\varphi+2\psi),z_0)\ge2,$$ and $\varphi$ is continuous at $z$ for any $z\in\partial D$. Let $c$ be a positive Lebesgue measurable function on $[0,+\infty)$ satisfying that $c(t)e^{-t}$ is decreasing on $[0,+\infty)$, $\lim_{t\rightarrow0+0}c(t)=c(0)=1$ and $\int_0^{+\infty}c(t)e^{-t}dt<+\infty$.

 Denote that  
$$\rho:=e^{-\varphi}c(-2\psi)\,\,\text{and}\,\,K_{\rho,\psi}(z):=K_{\rho\left(\frac{\partial \psi}{\partial v_p}\right)^{-1}}(z,\overline z)$$ 
and assume that $\rho$ has a positive lower bound on any compact subset of $D\backslash Z$, where $Z\subset\{\psi=-\infty\}$ is a discrete subset of $D$.

We present a weighted version of Saitoh's conjecture as follows:
\begin{Theorem}
\label{main theorem}Assume that $B_{\rho}(z_0)>0$. Then
	$$K_{\rho,\psi}(z_0)\ge \left(\int_0^{+\infty}c(t)e^{-t}dt\right)\pi B_{\rho}(z_0)$$ holds, and the equality holds if and only if the following statements hold:
	
	$(1)$ $\varphi+2\psi=2G_{D}(\cdot,z_0)+2u$, where $u$ is a harmonic function on $D$;
	 
	$(2)$ $\psi=p_0G_{D}(\cdot,z_0)$, where $p_0=v(dd^c(\psi),z_0)>0$;
	
	$(3)$ $\chi_{z_0}=\chi_{-u}$, where $\chi_{-u}$ and $\chi_{z_0}$ are the  characters associated to the functions $-u$ and $G_{D}(\cdot,z_0)$ respectively.
\end{Theorem}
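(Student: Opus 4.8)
The plan is to route both kernels through a single family of minimal $L^2$ integrals on the sublevel sets of $\psi$ and then exploit its concavity. For $t\ge 0$ set
\[
G(t):=\inf\left\{\int_{\{2\psi<-t\}}|f|^2 e^{-\varphi}c(-2\psi) : f\in\mathcal{O}(\{2\psi<-t\}),\ f(z_0)=1\right\}.
\]
The Lelong-number hypotheses $v(dd^c(\varphi+2\psi),z_0)\ge 2$ and $v(dd^c\psi,z_0)=p_0>0$ make $e^{-\varphi}c(-2\psi)\sim|z-z_0|^{2p_0-2}$ integrable near $z_0$, so that the normalization $f(z_0)=1$ is the correct constraint and point evaluation is bounded. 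Since $\psi<0$ on $D$ with $\psi|_{\partial D}=0$, we have $\{2\psi<0\}=D$, whence $G(0)=1/B_\rho(z_0)$ by the extremal characterization of the weighted Bergman kernel; and as $t\to+\infty$ the domains shrink to the poles of $\psi$, forcing $G(+\infty)=0$.

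The first main ingredient is the concavity property of Guan--Yuan / Guan--Mi--Yuan: under the standing hypotheses (subharmonicity of $\varphi+2\psi$, the gain condition that $c(t)e^{-t}$ is decreasing, and the technical alternatives $(a)$/$(b)$ near $z_0$), the function $r\mapsto G(h^{-1}(r))$ is concave on $(0,C_0)$, where $h(t):=\int_t^{+\infty}c(s)e^{-s}\,ds$ and $C_0:=h(0)=\int_0^{+\infty}c(t)e^{-t}\,dt$. Writing $\tilde G(r):=G(h^{-1}(r))$, this is a concave function with $\tilde G(0)=G(+\infty)=0$ and $\tilde G(C_0)=G(0)=1/B_\rho(z_0)$; the tangent line at $C_0$ lying above the graph then gives $\tilde G'(C_0^-)\le \tilde G(C_0)/C_0=1/(C_0 B_\rho(z_0))$. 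Because $C_0-h(t)=\int_0^t c(s)e^{-s}\,ds\sim t$ as $t\to 0^+$ (here $c(0)=1$), monotonicity of the difference quotients of a concave function identifies $\tilde G'(C_0^-)=\lim_{t\to0^+}(G(0)-G(t))/t$.

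The second main ingredient is a coarea computation at $\partial D$ that injects the Szegő kernel and the factor $\pi$. Using $2\psi$ as a defining function for $\partial D$ (here $\psi\in C^1$ near $\partial D$ with $\partial\psi/\partial v_p>0$), together with $\varphi$ continuous on $\partial D$ and $c(0)=1$, one obtains for any admissible $f$
\[
\lim_{t\to0^+}\frac1t\int_{\{-t<2\psi<0\}}|f|^2 e^{-\varphi}c(-2\psi)=\frac12\int_{\partial D}|f|^2\rho\left(\frac{\partial\psi}{\partial v_p}\right)^{-1}|dz|=\pi\cdot\frac{1}{2\pi}\int_{\partial D}|f|^2\rho\left(\frac{\partial\psi}{\partial v_p}\right)^{-1}|dz|.
\]
Applying this to the Bergman extremal function $f_0$ realizing $G(0)$, and using $G(t)\le\int_{\{2\psi<-t\}}|f_0|^2\rho$ so that $G(0)-G(t)\ge\int_{\{-t<2\psi<0\}}|f_0|^2\rho$, one gets $\lim_{t\to0^+}(G(0)-G(t))/t\ge \pi/K_{\rho,\psi}(z_0)$ by the extremal characterization of the weighted Szegő kernel. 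Chaining this with the concavity bound yields
\[
\frac{\pi}{K_{\rho,\psi}(z_0)}\le \tilde G'(C_0^-)\le\frac{1}{C_0 B_\rho(z_0)},
\]
which is exactly $K_{\rho,\psi}(z_0)\ge C_0\,\pi\,B_\rho(z_0)$.

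For the equality statement, equality forces both inequalities above to be equalities. Equality $\tilde G'(C_0^-)=\tilde G(C_0)/C_0$ together with $\tilde G(0)=0$ forces $\tilde G$ to be \emph{linear} on $[0,C_0]$; invoking the characterization of the linearity case in the concavity property pins down the Green-function form of the weights, giving $(2)$ $\psi=p_0 G_D(\cdot,z_0)$ and $(1)$ $\varphi+2\psi=2G_D(\cdot,z_0)+2u$ with $u$ harmonic. Equality in the boundary step forces the Bergman and Szegő extremal functions to coincide up to a constant; analyzing the associated multiplicative extremal function on the universal cover, as in the equality case of the extended Suita conjecture, this single-valuedness is equivalent to the character condition $(3)$ $\chi_{z_0}=\chi_{-u}$. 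Conversely, when $(1)$--$(3)$ hold the two extremal functions agree, $\tilde G$ is linear, and a direct computation gives equality. I expect the main obstacle to be exactly this rigidity analysis: converting linearity of $\tilde G$ and the matching of the two extremal functions into the precise trio $(1)$--$(3)$, and in particular extracting the character condition $(3)$, where the covering-space / multiplicative-function input must be combined with the sharp equality case of the concavity property.
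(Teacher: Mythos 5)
Your overall architecture --- the minimal $L^2$ integrals $G(t)$ on $\{2\psi<-t\}$, concavity of $G(h^{-1}(r))$, a boundary coarea computation producing the factor $\pi$, and the linearity characterization for the equality case --- is exactly the paper's. But there are three concrete gaps. The most serious is the boundary step: you claim $\lim_{t\to0^+}\frac1t\int_{\{-t<2\psi<0\}}|f|^2e^{-\varphi}c(-2\psi)=\frac12\int_{\partial D}|f|^2\rho\left(\frac{\partial\psi}{\partial v_z}\right)^{-1}|dz|$ ``for any admissible $f$'' and then invoke ``the extremal characterization of the weighted Szeg\"o kernel.'' For a merely square-integrable holomorphic $f$ the boundary integral need not even be defined, and the Szeg\"o reproducing formula only applies to $f\in H^{(c)}_2(D)$. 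The paper must first prove that the extremal function $F_0$ realizing $G(0)$ lies in $H^{(c)}_2(D)$ (Lemma \ref{l:2}), and this rests on a genuinely geometric comparison between the superlevel sets of $\psi$ and of $G_D(\cdot,z_0)$ near $\partial D$ (Lemma \ref{l:1}); after that, only a Fatou-type inequality $\int_{\partial D}|F_0|^2\lambda|dz|\le 2\liminf(\cdots)$ is obtained, which is the direction actually needed. You cannot bypass this membership step, and your asserted exact limit is not justified.

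Second, in the necessity direction the linearity characterization (Theorem \ref{thm:m-points}) does not directly give your statement $(1)$: it gives $\varphi+2\psi=2\log|g|+2G_D(\cdot,z_0)+2u_1$ with $g$ holomorphic, nonvanishing at $z_0$ but possibly vanishing elsewhere. An extra argument is required to remove the $2\log|g|$ term; the paper replaces $h:=\varphi+2\psi-2G_D(\cdot,z_0)$ by the harmonic extension $\tilde h$ of its boundary values, observes $K_{\tilde\rho,\psi}(z_0)=K_{\rho,\psi}(z_0)$ while $B_{\tilde\rho}(z_0)\ge B_{\rho}(z_0)$, and uses the already-proved inequality to force $B_{\tilde\rho}(z_0)=B_{\rho}(z_0)$, hence $h=\tilde h$ and $g$ zero-free. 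Third, for sufficiency you assert that ``the two extremal functions agree'' without a mechanism; the paper proves $F_0\equiv M:=K_{\lambda}(\cdot,\overline{z_0})/K_{\lambda}(z_0,\overline{z_0})$ by propagating the interior orthogonality $\int_{\{2\psi<-t\}}F_0\overline{F_0-M}\,e^{-\varphi}c(-2\psi)=0$ (Lemma \ref{l:unique}) to $\partial D$ via the coarea formula and continuity of $|F_0|$ and $|M|$ up to the boundary, and then invoking Nehari's minimality lemma (Lemma \ref{l:szego2}). These three steps are where the real work lies, and your proposal leaves each of them open.
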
 

\begin{Remark}
	\label{rem:function}
	Let $p$ be the universal covering from unit disc $\Delta$ to $ D$. 
	When the three statements $(1)-(3)$ in  Theorem \ref{main theorem} hold, 
	$$K_{\rho,\psi}(\cdot,\overline{z_0})=\left(\int_0^{+\infty}c(t)e^{-t}dt\right)\pi B_{\rho}(\cdot,\overline{z_0})=c_1(p_*(f_{z_0}))'p_*(f_u),$$
	where $K_{\rho,\psi}(\cdot,\overline{z_0})$ denotes $K_{\rho\left(\frac{\partial\psi}{\partial v_p}\right)^{-1}}(\cdot,\overline{z_0})$, $c_1$ is a constant, $f_u$ is a holomorphic function on $\Delta$ such that $|f_u|=p^*(e^u)$, and $f_{z_0}$ is a holomorphic function on $\Delta$ such that $|f_{z_0}|=p^*(e^{G_D(\cdot,z_0)})$. We prove the remark in Section \ref{sec:proof1}.
\end{Remark}

\begin{Remark}
	 For any $z_0\in D$, there exists  $u\in C(\overline D)$ such that $u$ is harmonic on $D$ and $\chi_{z_0}=\chi_{-u}$. In fact, $u(z):=\log|z-z_0|-G_{D}(z,z_0)$ is harmonic on $D$ and $\chi_{z_0}=\chi_{-u}$.
\end{Remark}

Let $\lambda$ be any positive continuous function on $\partial D$. By solving the Dirichlet problem, there exists $u\in C(\overline D)$ satisfying that $u|_{\partial D}=-\frac{1}{2}\log\lambda$ and $u$ is harmonic on $D$. When $\psi=G_{D}(\cdot,z_0)$, $\hat K_{\lambda}(z_0)$ denotes $K_{\lambda,\psi}(z_0)$.

Theorem \ref{main theorem} implies the following corollary.
\begin{Corollary}
	$\hat K_{\lambda}(z_0)\ge \pi B_{e^{-2u}}(z_0)$ holds for any $z_0\in D$, and the equality holds if and only if $\chi_{z_0}=\chi_{-u}$.
\end{Corollary}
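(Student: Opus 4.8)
The plan is to derive the corollary directly from Theorem \ref{main theorem} by specializing to $c\equiv 1$, $\psi=G_{D}(\cdot,z_0)$ and $\varphi=2u$, where $u\in C(\overline D)$ is the solution of the Dirichlet problem with $u|_{\partial D}=-\frac{1}{2}\log\lambda$ furnished in the paragraph preceding the corollary. The first task is to verify that these choices meet every hypothesis of Theorem \ref{main theorem}. For $\psi=G_{D}(\cdot,z_0)$: it is subharmonic on $D$, vanishes on $\partial D$, has Lelong number $v(dd^c\psi,z_0)=1>0$ so that $p_0=1$, and is $C^1$ up to $\partial D$ with positive outer normal derivative by the analyticity of the boundary; moreover $(\psi-p_0G_{D}(\cdot,z_0))(z_0)=0>-\infty$, so alternative $(a)$ holds. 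For $\varphi=2u$: since $u$ is harmonic, $\varphi+2\psi=2u+2G_{D}(\cdot,z_0)$ is subharmonic with $v(dd^c(\varphi+2\psi),z_0)=2$, and $\varphi$ is continuous on $\overline D$. Finally $c\equiv 1$ makes $c(t)e^{-t}$ decreasing, gives $c(0)=1$, and yields $\int_0^{+\infty}c(t)e^{-t}dt=1<+\infty$.

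With $c\equiv 1$ we have $\rho=e^{-\varphi}c(-2\psi)=e^{-2u}$, which is continuous and strictly positive on $\overline D$, hence bounded below on compact sets and integrable in the required sense (one may take $Z=\emptyset$). Thus $B_{\rho}(z_0)=B_{e^{-2u}}(z_0)$ and the constant in Theorem \ref{main theorem} is $\int_0^{+\infty}c(t)e^{-t}dt=1$. The one point needing care, and the crux of the argument, is identifying the Szeg\"o side. By construction $u|_{\partial D}=-\frac{1}{2}\log\lambda$, so $e^{-2u}=\lambda$ on $\partial D$. Since the weighted Szeg\"o kernel depends only on the boundary values of its weight, the boundary weights $\rho\left(\frac{\partial\psi}{\partial v_p}\right)^{-1}=e^{-2u}\left(\frac{\partial G_{D}(\cdot,z_0)}{\partial v_p}\right)^{-1}$ and $\lambda\left(\frac{\partial G_{D}(\cdot,z_0)}{\partial v_p}\right)^{-1}$ coincide, whence $K_{\rho,\psi}(z_0)=K_{\lambda,\psi}(z_0)=\hat K_{\lambda}(z_0)$ by the very definition of $\hat K_{\lambda}$. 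Substituting into Theorem \ref{main theorem} gives $\hat K_{\lambda}(z_0)\ge\pi B_{e^{-2u}}(z_0)$.

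For the equality statement I would feed these same choices into conditions $(1)$--$(3)$ of Theorem \ref{main theorem} and observe that the first two are automatic. Indeed $\varphi+2\psi=2G_{D}(\cdot,z_0)+2u$ with $u$ harmonic, so $(1)$ holds with the theorem's harmonic function equal to the corollary's $u$; and $\psi=1\cdot G_{D}(\cdot,z_0)=p_0G_{D}(\cdot,z_0)$, so $(2)$ holds. Consequently only condition $(3)$, namely $\chi_{z_0}=\chi_{-u}$ for this same $u$, survives, and equality in Theorem \ref{main theorem} holds if and only if $\chi_{z_0}=\chi_{-u}$. This is exactly the asserted criterion, completing the corollary. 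As the derivation is essentially a specialization, no genuine obstacle arises beyond the boundary-weight identification $e^{-2u}|_{\partial D}=\lambda$ noted above, which is what aligns the Szeg\"o kernel $\hat K_{\lambda}$ with the weighted Bergman kernel $B_{e^{-2u}}$.
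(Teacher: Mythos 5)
Your proposal is correct and is exactly the derivation the paper intends: the corollary is stated as an immediate specialization of Theorem \ref{main theorem} with $c\equiv 1$, $\psi=G_{D}(\cdot,z_0)$ and $\varphi=2u$, and your verification of the hypotheses, the boundary identification $\rho|_{\partial D}=e^{-2u}|_{\partial D}=\lambda$, and the reduction of conditions $(1)$--$(3)$ to $\chi_{z_0}=\chi_{-u}$ fill in precisely the routine checks the paper leaves implicit.
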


Note that $\chi_{z_0}\equiv1$ holds if and only if $n=1$ (see \cite{suita}), then 
 the above corollary is Theorem \ref{thm:saitoh} when $\lambda\equiv1$ and $u\equiv0$.

\subsection{Applications: the weighted version of Saitoh's conjecture for higher derivatives }
Let $D$ be a planar regular region with $n$ boundary components which are analytic Jordan curves, and let $z_0\in D$.  

Let $\psi$ be a Lebesgue measurable function on  $\overline D$, which satisfies that $\psi$ is  subharmonic on $D$, $\psi|_{\partial D}\equiv 0$
 and the Lelong number $v(\psi,z_0)>0$. Assume that $\psi\in C^1(U\cap\overline{D})$ for an open neighborhood $U$ of $\partial D$ and $\frac{\partial\psi}{\partial v_p}$ is positive on $\partial D$. Assume that  one of the following two statements holds:
 
  $(a)$ $(\psi-p_0G_{D}(\cdot,z_0))(z_0)>-\infty$, where $p_0=v(dd^c(\psi),z_0)>0$;
  
  $(b)$ $\varphi+2a\psi$ is subharmonic near $z_0$ for some $a\in[0,1)$.
  
Let $k$ be a nonnegative integer. Let $\varphi$ be a Lebesgue measurable function on $\overline D$ satisfying that $\varphi+2\psi$ is subharmonic on $D$, the Lelong number $$v(dd^c(\varphi+2\psi),z_0)\ge2(k+1),$$
 and $\varphi$ is continuous at $z$ for any $z\in\partial D$. Let $c$ be a positive Lebesgue measurable function on $[0,+\infty)$ satisfying that $c(t)e^{-t}$ is decreasing on $[0,+\infty)$, $\lim_{t\rightarrow0+0}c(t)=c(0)=1$ and $\int_0^{+\infty}c(t)e^{-t}dt<+\infty$.

  Denote that
$$\rho:=e^{-\varphi}c(-2\psi),$$ 
and assume that $\rho$ has a positive lower bound on any compact subset of $D\backslash Z$, where $Z\subset\{\psi=-\infty\}$ is a discrete subset of $D$. 

Let us consider two kernel functions for higher derivatives.
Denote that
\begin{displaymath}
	\begin{split}
		&B^{(k)}_{\rho}(z_0)\\
		&:=\sup\left\{\left|\frac{f^{(k)}(z_0)}{k!}\right|^2:f\in\mathcal{O}(D),\,\,\int_D|f|^2\rho\le1\,\,\&\,\,f(z_0)=\ldots=f^{(k-1)}(z_0)=0 \right\}.
	\end{split}
\end{displaymath}
When $\rho\equiv1$, $B^{(k)}_{\rho}(z_0)$ is the Bergman kernel for higher derivatives (see \cite{Berg70,Blo18}). When $k=0$, $B^{(k)}_{\rho}(z_0)$ is the weighted Bergman kernel $B_{\rho}(z_0)$ (see Section \ref{sec:main}).
Denote that 
\begin{displaymath}
	\begin{split}
		K_{\rho,\psi}^{(k)}(z_0):=&\sup\Bigg\{\left|\frac{f^{(k)}(z_0)}{k!}\right|^2:f\in H^{(c)}_2(D),\\
		&\int_{\partial D}|f|^2\rho\left(\frac{\partial\psi}{\partial v_z} \right)^{-1}|dz|\le1\,\,\&\,\,f(z_0)=\ldots=f^{(k-1)}(z_0)=0 \Bigg\}.
	\end{split}
\end{displaymath}
Especially, when $k=0$, $K_{\rho,\psi}^{(k)}(z_0)$ is the weighted  Szeg\"o kernel $K_{\rho,\psi}(z_0)$ (see Section \ref{sec:main}).

We present a weighted version of Saitoh's conjecture for higher derivatives as follows:

\begin{Corollary}
	\label{c:higher}Assume that $B^{(k)}_{\rho}(z_0)>0$. Then
	$$K_{\rho}^{(k)}(z_0)\ge\left(\int_0^{+\infty}c(t)e^{-t}dt\right) \pi B^{(k)}_{\rho}(z_0)$$
	 holds, and the equality holds if and only if  the following statements hold:
	
	$(1)$ $\varphi+2\psi=2(k+1)G_{D}(\cdot,z_0)+2u$, where $u$ is a harmonic function on $D$;
	 
	$(2)$ $\psi=p_0G_{D}(\cdot,z_0)$, where $p_0=v(dd^c(\psi),z_0)>0$;
	
	$(3)$ $\chi_{z_0}^{k+1}=\chi_{-u}$, where $\chi_{-u}$ and $\chi_{z_0}$ are the  characters associated to the functions $-u$ and $G_{D}(\cdot,z_0)$ respectively.
\end{Corollary}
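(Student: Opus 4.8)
The plan is to deduce Corollary \ref{c:higher} from Theorem \ref{main theorem} by a single-variable reduction that converts the order-$k$ jet condition into a modification of the weight. Since $D\subset\mathbb C$ is planar, the function $(z-z_0)^k$ is holomorphic and single-valued on $D$, vanishes exactly to order $k$ at $z_0$, and is nonvanishing elsewhere with continuous nonzero boundary values on $\partial D$. First I would set up the bijection $f\mapsto g:=f/(z-z_0)^k$ between the holomorphic functions on $D$ vanishing to order $k$ at $z_0$ and all of $\mathcal O(D)$, and likewise between the corresponding subclasses of $H^{(c)}_2(D)$; under it one has $g(z_0)=f^{(k)}(z_0)/k!$ together with $\int_D|f|^2\rho=\int_D|g|^2\hat\rho$ and $\int_{\partial D}|f|^2\rho(\partial\psi/\partial v_z)^{-1}|dz|=\int_{\partial D}|g|^2\hat\rho(\partial\psi/\partial v_z)^{-1}|dz|$, where $\hat\rho:=|z-z_0|^{2k}\rho$. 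Taking suprema gives the kernel identities $B_\rho^{(k)}(z_0)=B_{\hat\rho}(z_0)$ and $K_{\rho,\psi}^{(k)}(z_0)=K_{\hat\rho,\psi}(z_0)$, reducing the corollary to the $k=0$ statement for the weight $\hat\rho$.

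Next I would check that $\hat\rho=e^{-\hat\varphi}c(-2\psi)$ with $\hat\varphi:=\varphi-2k\log|z-z_0|$ satisfies the hypotheses of Theorem \ref{main theorem}. Away from $z_0$ the term $\log|z-z_0|$ is harmonic, so $\hat\varphi+2\psi$ is subharmonic there; at $z_0$ the Riesz mass of $dd^c(\hat\varphi+2\psi)=dd^c(\varphi+2\psi)-2k\delta_{z_0}$ equals $v(dd^c(\varphi+2\psi),z_0)-2k\ge 2(k+1)-2k=2\ge0$, so $\hat\varphi+2\psi$ is subharmonic on $D$ with Lelong number $v(dd^c(\hat\varphi+2\psi),z_0)\ge2$, exactly the bound required in Theorem \ref{main theorem}. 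Continuity of $\hat\varphi$ at boundary points is clear since $z_0\in D$, and $\hat\rho$ keeps a positive lower bound on compact subsets of $D\setminus(Z\cup\{z_0\})$, where $Z\cup\{z_0\}\subset\{\psi=-\infty\}$ is again a discrete subset. The hypotheses on $\psi$ are untouched, so condition $(a)$ transfers verbatim; for condition $(b)$ I would note that $\varphi+2a'\psi$ stays subharmonic near $z_0$ for every $a'\in[a,1)$ and that its Lelong number there equals $v(dd^c(\varphi+2\psi),z_0)-2(1-a')p_0$, which is $\ge 2k$ as soon as $a'\ge 1-1/p_0$; hence choosing $a'\in[\max(a,1-1/p_0),1)$ keeps $\hat\varphi+2a'\psi=\varphi+2a'\psi-2k\log|z-z_0|$ subharmonic near $z_0$, so $(b)$ transfers with the adjusted exponent $a'$.

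With the hypotheses verified, Theorem \ref{main theorem} applied to $\hat\rho$ yields $K_{\hat\rho,\psi}(z_0)\ge\left(\int_0^{+\infty}c(t)e^{-t}dt\right)\pi B_{\hat\rho}(z_0)$, which is the desired inequality after the identities of the first paragraph. For the equality case I would translate conditions $(1)$–$(3)$ of Theorem \ref{main theorem} for $\hat\varphi$ back to $\varphi$ using $\log|z-z_0|=G_D(\cdot,z_0)+h_0$, where $h_0:=\log|z-z_0|-G_D(\cdot,z_0)$ is harmonic on $D$ and satisfies $\chi_{-h_0}=\chi_{z_0}$. Condition $(1)$ in the form $\hat\varphi+2\psi=2G_D(\cdot,z_0)+2\hat u$ gives $\varphi+2\psi=2(k+1)G_D(\cdot,z_0)+2u$ with $u:=\hat u+kh_0$ harmonic, which is statement $(1)$ of the corollary; statement $(2)$, $\psi=p_0G_D(\cdot,z_0)$, is unchanged; and since $\chi_{-\hat u}=\chi_{-u}\chi_{h_0}^{k}=\chi_{-u}\chi_{z_0}^{-k}$, the relation $\chi_{z_0}=\chi_{-\hat u}$ becomes $\chi_{z_0}^{k+1}=\chi_{-u}$, which is statement $(3)$. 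I expect the main work to lie not in these computations but in the first step: rigorously justifying that multiplication by $(z-z_0)^k$ is a bijection of the Hardy-type class $H^{(c)}_2(D)$ preserving the boundary-value integrals (the Bergman side being immediate), together with the careful tracking of characters, which is exactly where the exponent $k+1$ in $(3)$ originates.
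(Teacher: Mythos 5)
Your proposal is correct and follows essentially the same route as the paper: pass to $\tilde\varphi=\varphi-2k\log|z-z_0|$, identify $B^{(k)}_{\rho}(z_0)=B_{\tilde\rho}(z_0)$ and $K^{(k)}_{\rho,\psi}(z_0)=K_{\tilde\rho,\psi}(z_0)$ via division by $(z-z_0)^k$, apply Theorem \ref{main theorem}, and translate the equality conditions using the harmonic function $\log|z-z_0|-G_D(\cdot,z_0)$ whose character is $\chi_{z_0}$. Your verification of hypotheses $(a)$/$(b)$ for the shifted weight is somewhat more explicit than the paper's, but the argument is the same.
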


Let $\lambda$ be arbitrary positive continuous function on $\partial D$. By solving the Dirichlet problem, there exists $u\in C(\overline D)$ satisfying that $u|_{\partial D}=-\frac{1}{2}\log\lambda$ and $u$ is harmonic on $D$. When $\psi=(k+1)G_{D}(\cdot,z_0)$, $\hat K^{(k)}_{\lambda}(z_0)$ denotes $K^{(k)}_{\lambda,\psi}(z_0)$.

Corollary \ref{c:higher} implies the following corollary.
\begin{Corollary}
	$\hat K^{(k)}_{\lambda}(z_0)\ge \pi B^{(k)}_{e^{-2u}}(z_0)$ holds for any $z_0\in D$, and the equality holds if and only if $\chi_{z_0}^{k+1}=\chi_{-u}$.
\end{Corollary}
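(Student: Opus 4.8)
The plan is to derive this corollary directly from Corollary \ref{c:higher} by specializing the triple $(\psi,\varphi,c)$. Given the positive continuous weight $\lambda$ on $\partial D$ together with the solution $u\in C(\overline D)$ of the Dirichlet problem with $u|_{\partial D}=-\frac{1}{2}\log\lambda$, I would choose
$$\psi:=(k+1)G_{D}(\cdot,z_0),\qquad \varphi:=2u,\qquad c\equiv 1,$$
so that $\rho=e^{-\varphi}c(-2\psi)=e^{-2u}$ and $\int_0^{+\infty}c(t)e^{-t}\,dt=1$.

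First I would verify that this data satisfies every hypothesis of Corollary \ref{c:higher}. Since $G_{D}(\cdot,z_0)$ is subharmonic on $D$, vanishes on $\partial D$, is $C^1$ up to the boundary away from the interior point $z_0$ with positive outer normal derivative, and carries Lelong number $1$ at $z_0$, the function $\psi$ inherits all the required properties with $p_0=v(dd^c\psi,z_0)=k+1>0$; condition $(a)$ holds trivially because $\psi-p_0G_{D}(\cdot,z_0)\equiv 0$. The function $\varphi+2\psi=2u+2(k+1)G_{D}(\cdot,z_0)$ is subharmonic on $D$ with $v(dd^c(\varphi+2\psi),z_0)=2(k+1)$, and $\varphi=2u$ is continuous on $\overline D$; finally $\rho=e^{-2u}$ is continuous and positive on the compact set $\overline D$, hence bounded below by a positive constant, so the lower-bound hypothesis holds with $Z=\emptyset$, and $c\equiv 1$ clearly meets its monotonicity, normalization and integrability requirements.

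Next I would identify the two kernels appearing in Corollary \ref{c:higher} with those in the statement. The Szeg\"o-type kernel $K^{(k)}_{\rho,\psi}(z_0)$ is defined through an integral over $\partial D$ against the boundary weight $\rho\left(\frac{\partial\psi}{\partial v_z}\right)^{-1}$, hence it depends on $\rho$ only through $\rho|_{\partial D}$; since $\rho|_{\partial D}=e^{-2u}|_{\partial D}=e^{\log\lambda}=\lambda$, this boundary weight equals $\lambda\left(\frac{\partial\psi}{\partial v_z}\right)^{-1}$, which is precisely the weight defining $\hat K^{(k)}_{\lambda}(z_0)=K^{(k)}_{\lambda,\psi}(z_0)$. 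Thus $K^{(k)}_{\rho,\psi}(z_0)=\hat K^{(k)}_{\lambda}(z_0)$, while $B^{(k)}_{\rho}(z_0)=B^{(k)}_{e^{-2u}}(z_0)$ because the Bergman kernel uses $\rho=e^{-2u}$ on all of $D$. Substituting into Corollary \ref{c:higher} and using $\int_0^{+\infty}c(t)e^{-t}\,dt=1$ yields $\hat K^{(k)}_{\lambda}(z_0)\ge \pi B^{(k)}_{e^{-2u}}(z_0)$.

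For the equality characterization, I would observe that with the above choices conditions $(1)$ and $(2)$ of Corollary \ref{c:higher} hold automatically: condition $(2)$ because $\psi=(k+1)G_{D}(\cdot,z_0)=p_0G_{D}(\cdot,z_0)$, and condition $(1)$ because $\varphi+2\psi=2(k+1)G_{D}(\cdot,z_0)+2u$ already has the prescribed form with $u$ itself as the harmonic function. Consequently the equality in Corollary \ref{c:higher} collapses to condition $(3)$, namely $\chi_{z_0}^{k+1}=\chi_{-u}$, which is exactly the asserted criterion. The only point requiring genuine care is the insensitivity of the Szeg\"o kernel to the interior behavior of $\rho$ together with the boundary matching $e^{-2u}|_{\partial D}=\lambda$ coming from the Dirichlet problem; once this is made clean, the corollary follows immediately.
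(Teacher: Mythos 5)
Your proposal is correct and is exactly the intended derivation: the paper states this corollary as an immediate specialization of Corollary \ref{c:higher} with $\psi=(k+1)G_{D}(\cdot,z_0)$, $\varphi=2u$, $c\equiv 1$ (so $\rho=e^{-2u}$ and $\int_0^{+\infty}c(t)e^{-t}\,dt=1$), and your verification of the hypotheses, the boundary identification $\rho|_{\partial D}=\lambda$, and the collapse of the equality conditions $(1)$--$(2)$ to leave only $(3)$ matches the paper's (unwritten but clearly intended) argument.
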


\section{Preparations}

In this section, we do some preparations.

\subsection{A sufficient condition for $f\in H^{(c)}_2(D)$}

Let $D$ be a planar regular region with $n$ boundary components which are analytic Jordan curves, and let $z_0\in D$. Let $\psi$ be as in Theorem \ref{main theorem}. Let $f$ be a holomorphic function on $D$. In this section, we give a sufficient condition for $f\in H^{(c)}_2(D)$ (i.e. Lemma \ref{l:2}).

We recall the following basic formula, and we give a proof for the convenience of readers.
\begin{Lemma}
	\label{l:4}$\frac{\partial \psi}{\partial v_z}=\left(\left(\frac{\partial \psi}{\partial x}\right)^2+\left(\frac{\partial \psi}{\partial y}\right)^2\right)^{\frac{1}{2}}$ on $\partial D$, where $\partial/\partial v_z$ denotes the derivative along the outer normal unit vector $v_z$. 
\end{Lemma}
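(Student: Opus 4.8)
The plan is to work pointwise at an arbitrary $p\in\partial D$ and exploit the orthonormal decomposition of the gradient $\nabla\psi=\left(\frac{\partial\psi}{\partial x},\frac{\partial\psi}{\partial y}\right)$ into its normal and tangential parts. Since the boundary is an analytic Jordan curve, at $p$ the outer normal unit vector $v_z$ and the unit tangent vector $\tau$ to $\partial D$ form an orthonormal basis of $\mathbb{R}^2$. Because $\psi\in C^1(U\cap\overline D)$ for a neighborhood $U$ of $\partial D$, the gradient is well defined at $p$ and may be written
$$
\nabla\psi=\left(\frac{\partial\psi}{\partial v_z}\right)v_z+\left(\frac{\partial\psi}{\partial\tau}\right)\tau,
$$
so that taking Euclidean norms gives
$$
\left(\frac{\partial\psi}{\partial x}\right)^2+\left(\frac{\partial\psi}{\partial y}\right)^2
=|\nabla\psi|^2
=\left(\frac{\partial\psi}{\partial v_z}\right)^2+\left(\frac{\partial\psi}{\partial\tau}\right)^2.
$$

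The crux is to show that the tangential derivative vanishes, i.e. $\frac{\partial\psi}{\partial\tau}=0$ on $\partial D$. This is where the hypothesis $\psi|_{\partial D}\equiv 0$ enters: parametrizing $\partial D$ near $p$ by arclength as $s\mapsto\gamma(s)$ with $\gamma'(s)=\tau$, the composite $s\mapsto\psi(\gamma(s))$ is identically zero, and differentiating (legitimate by the $C^1$ regularity up to the boundary together with the analyticity of $\gamma$) yields $\frac{\partial\psi}{\partial\tau}=\nabla\psi\cdot\tau=0$. Substituting this into the norm identity collapses it to
$$
\left(\frac{\partial\psi}{\partial x}\right)^2+\left(\frac{\partial\psi}{\partial y}\right)^2=\left(\frac{\partial\psi}{\partial v_z}\right)^2.
$$

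It then remains only to fix the sign of the square root. The identity above gives $\frac{\partial\psi}{\partial v_z}=\pm\left(\left(\frac{\partial\psi}{\partial x}\right)^2+\left(\frac{\partial\psi}{\partial y}\right)^2\right)^{1/2}$, and the standing assumption that $\frac{\partial\psi}{\partial v_z}$ is positive on $\partial D$ selects the $+$ sign, which is exactly the asserted formula. I do not expect a genuine obstacle here; the only point requiring care is the justification that one may restrict the $C^1$ function $\psi$ to the boundary curve and differentiate along it, which is immediate from the $C^1(U\cap\overline D)$ hypothesis and the smoothness of the analytic boundary.
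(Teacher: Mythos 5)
Your proof is correct and rests on the same geometric fact as the paper's: since $\psi\equiv 0$ on $\partial D$, the gradient $\nabla\psi$ is normal to the boundary, so its norm equals $\pm\frac{\partial\psi}{\partial v_z}$ and the standing positivity assumption fixes the sign. The paper phrases this by asserting directly that $v_z=\nabla\psi/|\nabla\psi|$ (after an auxiliary coordinate computation that is not really needed for the conclusion), whereas you make the key point --- the vanishing of the tangential derivative --- explicit; this is a cosmetic, not substantive, difference.
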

\begin{proof}
		Fixed $z_1\in\partial D$, as $\frac{\partial \psi}{\partial v_z}$ is positive on $D$, we can assume that $\frac{\partial\psi}{\partial y}\not=0$ without loss of generality. Then there exists a neighborhood $U_1$ of $z_1$ with coordinates $(u,v)=(x,\psi(x+\sqrt{-1}y))$. It is clear that 
	\begin{displaymath}
		\frac{\partial u}{\partial x}=1,\,\,\frac{\partial u}{\partial y}=0,\,\,\frac{\partial v}{\partial x}=\frac{\partial \psi}{\partial x}\,\,\text{and}\,\,\frac{\partial v}{\partial y}=\frac{\partial \psi}{\partial y},
	\end{displaymath}
	which implies that 
	\begin{displaymath}
		\frac{\partial x}{\partial u}=1,\,\,\frac{\partial y}{\partial u}=-\frac{\frac{\partial \psi}{\partial x}}{\frac{\partial \psi}{\partial y}} ,\,\,\frac{\partial x}{\partial v}=0\,\,\text{and}\,\,\frac{\partial y}{\partial v}=\left(\frac{\partial \psi}{\partial y}\right)^{-1}.
	\end{displaymath}
It is clear that 
$$v_z=\frac{(\frac{\partial\psi}{\partial x},\frac{\partial\psi}{\partial y} )}{\left((\frac{\partial\psi}{\partial x})^2+(\frac{\partial\psi}{\partial y})^2\right)^{\frac{1}{2}}},$$
thus we have $\frac{\partial \psi}{\partial v_z}=\left(\left(\frac{\partial \psi}{\partial x}\right)^2+\left(\frac{\partial \psi}{\partial y}\right)^2\right)^{\frac{1}{2}}$.
\end{proof}

We give a relationship between the superlevel sets of $\psi$ and $G_D(\cdot,z_0)$.
\begin{Lemma}
	\label{l:1}There exist $t_0>0$ and $C>1$ such that 
	$$\{z\in D:G_D(z,z_0)\ge-t\}\subset\{z\in D:\psi(z)\ge-Ct\}$$
	for any $t\in(0,t_0)$.
\end{Lemma}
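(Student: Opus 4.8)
The plan is to exploit that $G_D(\cdot,z_0)$ and $\psi$ are both negative subharmonic functions on $D$ vanishing on $\partial D$, and that each is $C^1$ up to $\partial D$ in a one-sided neighborhood with a strictly positive outer normal derivative; from this I will deduce a pointwise comparison $\psi\ge C\,G_D(\cdot,z_0)$ on a collar of $\partial D$, which immediately yields the claimed inclusion of superlevel sets.

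First I would fix the signs and boundary behaviour. By the maximum principle, $\psi$ subharmonic on $D$ with $\psi|_{\partial D}\equiv0$ forces $\psi\le0$ on $D$, and since $v(dd^c\psi,z_0)>0$ makes $\psi(z_0)=-\infty$ we have $\psi\not\equiv0$, so $\psi<0$ on $D$; the same reasoning (or the explicit singularity $G_D(\cdot,z_0)\sim\log|\cdot-z_0|$) gives $G_D(\cdot,z_0)<0$ on $D\setminus\{z_0\}$ and $G_D|_{\partial D}=0$. Next I would localize the superlevel set of $G_D$ to a collar. Choose $\delta>0$ small enough that the collar $V:=\{z\in\overline D:\mathrm{dist}(z,\partial D)\le\delta\}$ is contained in the neighborhood $U$ where $\psi\in C^1$ and avoids $z_0$, so that $G_D(\cdot,z_0)$ is smooth up to $\partial D$ on $V$ by analyticity of the boundary. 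Since $G_D(\cdot,z_0)$ is upper semicontinuous and strictly negative on the compact set $\{z\in\overline D:\mathrm{dist}(z,\partial D)\ge\delta\}$, its maximum there equals some $-c_\delta<0$; hence for every $t\in(0,t_0)$ with $t_0:=c_\delta$ one has $\{G_D(\cdot,z_0)\ge-t\}\subset V$.

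The heart of the argument is the comparison in the collar. Both $\psi$ and $G_D(\cdot,z_0)$ vanish on $\partial D$, so their tangential derivatives vanish there, and a first-order expansion in the inner normal direction gives $\psi(z)=-\frac{\partial\psi}{\partial v_p}(p)\,d(z)+o(d(z))$ and $G_D(z,z_0)=-\frac{\partial G_D}{\partial v_p}(p)\,d(z)+o(d(z))$ as $d(z):=\mathrm{dist}(z,\partial D)\to0$, where $p$ denotes the foot point of $z$ on $\partial D$. Because $\frac{\partial\psi}{\partial v_p}$ and $\frac{\partial G_D}{\partial v_p}$ are positive and continuous on the compact set $\partial D$, the ratio $\psi/G_D(\cdot,z_0)$ stays bounded as $d\to0$, its limiting value being the continuous positive function $\frac{\partial\psi/\partial v_p}{\partial G_D/\partial v_p}$; on the remaining part of $V$ where $d$ is bounded below, both functions are continuous and bounded away from $0$, so the ratio is bounded there too. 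Consequently there is $C>1$ with $\psi\ge C\,G_D(\cdot,z_0)$ on $V$ (recall $G_D<0$). Combining with the previous step, for $t\in(0,t_0)$ and $z\in\{G_D(\cdot,z_0)\ge-t\}\subset V$ we obtain $\psi(z)\ge C\,G_D(z,z_0)\ge-Ct$, which is exactly the asserted inclusion.

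The main obstacle is the uniform control of the ratio $\psi/G_D(\cdot,z_0)$ up to $\partial D$: this is precisely where the hypotheses that $\psi\in C^1(U\cap\overline D)$ with $\frac{\partial\psi}{\partial v_p}>0$ on $\partial D$, together with the smoothness and positive normal derivative of $G_D(\cdot,z_0)$ coming from the analyticity of $\partial D$, are indispensable; without them one of the two functions could decay to $0$ at the boundary strictly faster than the other and no finite comparison constant $C$ would exist. The remaining ingredients — the maximum principle, the localization to a collar, and the final inequality manipulation — are routine.
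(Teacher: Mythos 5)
Your proof is correct, and it rests on the same underlying mechanism as the paper's: both $\psi$ and $G_D(\cdot,z_0)$ vanish on $\partial D$, are $C^1$ up to the boundary, and have strictly positive outer normal derivatives, so a first-order comparison at the boundary yields $\psi\ge C\,G_D(\cdot,z_0)$ on a collar, which gives the desired inclusion once the superlevel set $\{z\in D:G_D(z,z_0)\ge-t\}$ has been localized into that collar. The execution differs, though. The paper works locally near each boundary point in the coordinates $(u,v)=(x,G_D(x+\sqrt{-1}y,z_0))$ and recovers $\psi$ by integrating a fixed directional derivative $a\frac{\partial\psi}{\partial u}+b\frac{\partial\psi}{\partial v}$ (pinched between $m$ and $\frac{1}{m}$) along segments emanating from $\{v=0\}=\partial D$, so that $G_D$ itself serves as the transversal parameter and no distance function is needed. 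You instead compare both functions to the boundary distance $d(z)$ via a uniform first-order expansion along the inner normal from the foot point, and then bound the ratio $(-\psi)/(-G_D)$ on the collar. Your route requires the nearest-point projection to be well defined on a thin collar (automatic here since $\partial D$ consists of analytic Jordan curves) and a uniformity check on the $o(d(z))$ terms, which your appeal to the uniform continuity of the gradients on the compact collar does supply; the paper's route avoids the foot-point map at the cost of a slightly more computational change of variables. Both arguments are elementary and of essentially the same depth, and the pointwise inequality $\psi\ge C\,G_D(\cdot,z_0)$ on a collar that you establish is in fact marginally stronger than the stated inclusion of superlevel sets.
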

\begin{proof}
	As $\partial D$ is compact, it suffices to prove that for any $z_1\in\partial D$, there exist  a neighborhood $U$ of $z_1$, $t_0>0$ and $C>1$ such that $\{z\in D\cap U:G_D(z,z_0)\ge-t\}\subset\{z\in D\cap U:\psi(z)\ge-Ct\}$ for any $t\in(0,t_0)$.
	
	Fixed $z_1\in\partial D$, as $\frac{\partial G_{D}(z,z_0)}{\partial v_z}$ is positive on $D$, we can assume that $\frac{\partial G_{D}(z,z_0)}{\partial y}\not=0$ and $z_1$ is the origin $o$ in $\mathbb{C}$ without loss of generality. Then there exists a neighborhood $U_1$ of $z_1$ with coordinates $(u,v)=(x,G_D(x+\sqrt{-1}y,z_0))$. It is clear that 
	\begin{displaymath}
		\frac{\partial u}{\partial x}=1,\,\,\frac{\partial u}{\partial y}=0,\,\,\frac{\partial v}{\partial x}=\frac{\partial}{\partial x}G(z,z_0)\,\,\text{and}\,\,\frac{\partial v}{\partial y}=\frac{\partial}{\partial y}G_D(z,z_0),
	\end{displaymath}
	which implies that 
	\begin{displaymath}
		\frac{\partial x}{\partial u}=1,\,\,\frac{\partial y}{\partial u}=-\frac{\frac{\partial}{\partial x}G(z,z_0)}{\frac{\partial}{\partial y}G_D(z,z_0)},\,\,\frac{\partial x}{\partial v}=0\,\,\text{and}\,\,\frac{\partial y}{\partial v}=\left(\frac{\partial}{\partial y}G_D(z,z_0)\right)^{-1}.
	\end{displaymath}
	It is clear that 
$$v_z=\frac{(\frac{\partial G_D(z,z_0)}{\partial x},\frac{\partial G_D(z,z_0)}{\partial y} )}{\left((\frac{\partial G_D(z,z_0)}{\partial x})^2+(\frac{\partial G_D(z,z_0)}{\partial y})^2\right)^{\frac{1}{2}}}$$
on $\partial D$.
	Thus,  we have
	\begin{displaymath}
		\begin{split}
			&\frac{\partial \psi}{\partial u}\cdot\frac{\partial G_D(z,z_0)}{\partial x}+\frac{\partial \psi}{\partial v}\cdot|\bigtriangledown G_D(z,z_0)|^2\\
			=&\left(\frac{\partial\psi}{\partial x}\cdot\frac{\partial x}{\partial u}+\frac{\partial\psi}{\partial y}\cdot\frac{\partial y}{\partial u} \right)\frac{\partial G_D(z,z_0)}{\partial x}+\left(\frac{\partial\psi}{\partial x}\cdot\frac{\partial x}{\partial v}+\frac{\partial\psi}{\partial y}\cdot\frac{\partial y}{\partial v} \right)|\bigtriangledown G_D(z,z_0)|^2\\
			=&\left(\frac{\partial\psi}{\partial x}-\frac{\partial\psi}{\partial y}\cdot\frac{\frac{\partial}{\partial x}G(z,z_0)}{\frac{\partial}{\partial y}G_D(z,z_0)} \right)\frac{\partial G_D(z,z_0)}{\partial x}\\
			&+\frac{\partial\psi}{\partial y}\cdot\left(\frac{\partial}{\partial y}G_D(z,z_0)\right)^{-1}\cdot\left(\left(\frac{\partial G_D(z,z_0)}{\partial x}\right)^{2}+\left(\frac{\partial G_D(z,z_0)}{\partial y}\right)^{2}\right)\\
			=&\frac{\partial\psi}{\partial y}\cdot \frac{\partial G_D(z,z_0)}{\partial y}+\frac{\partial\psi}{\partial x}\cdot \frac{\partial G_D(z,z_0)}{\partial x}\\
			=&\frac{\frac{\partial \psi}{\partial v_z}}{\left((\frac{\partial G_D(z,z_0)}{\partial x})^2+(\frac{\partial G_D(z,z_0)}{\partial y})^2\right)^{\frac{1}{2}}}>0
		\end{split}
	\end{displaymath}
	on $\partial D$. Note that $|\bigtriangledown G_D(z,z_0)|^2>0$ on $\partial D$. There exist $a\in \mathbb{R}$, $m>0$, $r_0>0$ and $b>0$ such that 
	\begin{equation}
		\label{eq:0710a}m<a\frac{\partial \psi}{\partial u}+b\frac{\partial \psi}{\partial v}<\frac{1}{m}	\end{equation}
	on an open parallelogram $U_2:=\{(u,v):-r_0<v<r_0,\,\,\frac{a}{b}v-r_0<u<\frac{a}{b}v+r_0 \}\Subset U_1$. Note that $\psi|_{\{v=0\}}=\psi|_{\partial D}\equiv0$. For any $(u,v)\in U_2$, we have $(u-\frac{a}{b}v+ta,tb)\in U_2$ for any $t\in[0,\frac{v}{b}]$ and
	\begin{equation}
		\label{eq:0710b}\begin{split}
			\psi(u,v)&=\psi(u,v)-\psi(u-\frac{a}{b}v,0)\\
			&=\psi(u-\frac{a}{b}v+ta,tb)\bigg|_{t=0}^{t=\frac{v}{b}}\\
			&=\int_0^{\frac{v}{b}}\left(a\frac{\partial\psi}{\partial u}+b\frac{\partial\psi}{\partial v} \right)(u-\frac{a}{b}v+ta,tb)dt.
		\end{split}
	\end{equation}
	Thus, for any $t\in(0,r_0)$, if $G(z,z_0)=v\ge-t$, it follows form inequality \eqref{eq:0710a} and equality \eqref{eq:0710b} that 
	\begin{displaymath}
		\begin{split}
			\psi(u,v)&=-\int_{\frac{v}{b}}^0\left(a\frac{\partial\psi}{\partial u}+b\frac{\partial\psi}{\partial v} \right)(u-\frac{a}{b}v+ta,tb)dt\\
			&\geq\frac{v}{mb}\\
			&\geq -\frac{t}{mb},
		\end{split}
	\end{displaymath} 
	which implies that $\{z\in D\cap U_2:G_D(z,z_0)\ge-t\}\subset\{z\in D\cap U_2:\psi(z)\ge-\frac{1}{mb}t\}$ for any $t\in(0,r_0)$.
	
	Thus, Lemma \ref{l:1} holds.
\end{proof}

We recall the following coarea formula.
\begin{Lemma}[see \cite{federer}]
	\label{l:3}Suppose that $\Omega$ is an open set in $\mathbb{R}^n$ and $u\in C^1(\Omega)$. Then for any $g\in L^1(\Omega)$, 
	$$\int_{\Omega}g(x)|\bigtriangledown u(x)|dx=\int_{\mathbb{R}}\left(\int_{u^{-1}(t)}g(x)dH_{n-1}(x)\right)dt,$$
	where $H_{n-1}$ is the $(n-1)$-dimensional Hausdorff measure.
\end{Lemma}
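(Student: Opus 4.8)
The plan is to recognize this as the classical coarea formula and to reconstruct a proof in the spirit of Federer's treatment. The overall strategy is to reduce the identity to a local change-of-variables computation near the regular points of $u$, and to dispose of the critical points by a measure-zero argument. First I would decompose $\Omega=\{\bigtriangledown u\neq0\}\cup\{\bigtriangledown u=0\}$. On the critical set $\{\bigtriangledown u=0\}$ the left-hand integrand carries the vanishing factor $|\bigtriangledown u|$, so it contributes nothing; and by Sard's theorem the set of critical values $u(\{\bigtriangledown u=0\})$ has one-dimensional Lebesgue measure zero, so the inner integrals $\int_{u^{-1}(t)}g\,dH_{n-1}$ over those $t$ contribute nothing to the right-hand side either. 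Hence it suffices to prove the formula on the open set $\{\bigtriangledown u\neq0\}$.

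Next I would localize to a small ball $B\subset\{\bigtriangledown u\neq0\}$ on which, after a rotation, $\partial u/\partial x_n\neq0$. Then $\Phi(x)=(x_1,\dots,x_{n-1},u(x))$ is a $C^1$ diffeomorphism onto its image with Jacobian $\partial u/\partial x_n$, straightening the level sets $u^{-1}(t)$ into the hyperplanes $\{x_n=t\}$. Writing each level set as a graph, the induced Hausdorff measure satisfies $dH_{n-1}=|\bigtriangledown u|\,\left|\partial u/\partial x_n\right|^{-1}dx_1\cdots dx_{n-1}$, while the change of variables $x\mapsto(x_1,\dots,x_{n-1},t)$ in $\int_Bg|\bigtriangledown u|\,dx$ introduces precisely the factor $\left|\partial u/\partial x_n\right|^{-1}$. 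Combining these and applying Fubini's theorem reproduces $\int_{\mathbb{R}}\big(\int_{u^{-1}(t)\cap B}g\,dH_{n-1}\big)dt$ for $g$ supported in $B$. A partition of unity subordinate to a cover of $\{\bigtriangledown u\neq0\}$ by such balls globalizes this to all nonnegative $g\in L^1$ supported off the critical set, and monotone convergence followed by splitting into positive and negative parts extends it to arbitrary $g\in L^1(\Omega)$.

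The hard part will not be any single computation but the measure-theoretic bookkeeping: one must check that $u^{-1}(t)$ is $H_{n-1}$-rectifiable for almost every $t$, that the restriction of $H_{n-1}$ to these level sets is the measure against which $g$ should be integrated, and that the interchange of the spatial integration with the level-parameter integration is justified. These are exactly the points where Sard's theorem and the structure theory of rectifiable sets are needed, which is why it is cleanest to cite Federer's theorem directly, as in the statement, rather than to redevelop that apparatus here.
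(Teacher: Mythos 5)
The paper offers no proof of this lemma at all --- it is quoted verbatim from Federer --- so there is no in-paper argument to compare with; what follows is an assessment of your sketch on its own terms. The local part of your argument is sound: on $\{\bigtriangledown u\neq0\}$ the straightening map $\Phi(x)=(x_1,\dots,x_{n-1},u(x))$ has Jacobian $\partial u/\partial x_n$, the graph description of the level sets gives $dH_{n-1}=|\bigtriangledown u|\,|\partial u/\partial x_n|^{-1}dx_1\cdots dx_{n-1}$, and the two factors cancel as you say; the partition-of-unity and monotone-convergence globalization is also standard.

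The genuine gap is the appeal to Sard's theorem. For a real-valued map on an open subset of $\mathbb{R}^n$, classical Sard requires $u\in C^{k}$ with $k\ge n$; for $u$ merely $C^{1}$ and $n\ge2$ (and $n=2$ is precisely the case this paper uses), the set of critical values $u(\{\bigtriangledown u=0\})$ can have \emph{positive} one-dimensional Lebesgue measure --- Whitney constructed a $C^{1}$ function on $\mathbb{R}^{2}$ that is nonconstant on a connected set of critical points. So you cannot dismiss the critical levels by saying the set of such $t$ is null. The statement you actually need is weaker and is true, but it requires a different tool, namely the coarea (Eilenberg-type) inequality: for $C^{1}$ (indeed Lipschitz) $u$ and any measurable $A\subset\Omega$,
$$\int_{\mathbb{R}}^{*}H_{n-1}\!\left(A\cap u^{-1}(t)\right)dt\;\le\;\int_{A}|\bigtriangledown u(x)|\,dx.$$
Taking $A=\{\bigtriangledown u=0\}$ makes the right-hand side vanish, hence $H_{n-1}\!\left(u^{-1}(t)\cap\{\bigtriangledown u=0\}\right)=0$ for a.e.\ $t$, which is exactly what is needed to discard the critical set from the right-hand side of the formula. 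This inequality is itself one of the nontrivial steps in Federer's proof, not a consequence of Sard; with it substituted for your Sard step, and with the measurability of $t\mapsto\int_{u^{-1}(t)}g\,dH_{n-1}$ and the rectifiability of almost every level set supplied as you indicate, the outline closes up.
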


The following lemma give a sufficient condition for $f\in H^{(c)}_2(D)$.

\begin{Lemma}
	\label{l:2}
	Let $f$ be a holomorphic function on $D$.   Assume that 
	\begin{equation}
		\label{eq:0708a}\liminf_{r\rightarrow1-0}\frac{\int_{\{z\in D:\psi(z)\ge\log r\}}|f(z)|^2}{1-r}<+\infty,
	\end{equation}
	then we have $f\in H^{(c)}_2(D)$.
\end{Lemma}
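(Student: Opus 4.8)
The plan is to produce an explicit harmonic majorant for $|f|^2$ on $D$, which is exactly what membership in $H^{(c)}_2(D)$ requires. The hypotheses on $\psi$ (that $\psi\in C^1$ near $\partial D$ with $\frac{\partial\psi}{\partial v_z}>0$, hence $|\bigtriangledown\psi|>0$ there by Lemma \ref{l:4}, and $\psi|_{\partial D}\equiv0$) mean that a one-sided collar of $\partial D$ is smoothly foliated by the level curves $\Gamma_\tau:=\{\psi=\tau\}$, $\tau\in(-\delta,0)$, and I will use the domains they bound as an exhaustion of $D$ from inside.

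First I would rewrite the hypothesis as a statement about line integrals over the $\Gamma_\tau$. Since $\frac{-\log r}{1-r}\to1$ as $r\to1-0$, condition \eqref{eq:0708a} is equivalent to $\liminf_{s\to0+}\frac1s\int_{\{\psi\ge -s\}}|f|^2<+\infty$. For $s<\delta$ the superlevel set $\{\psi\ge-s\}$ lies in the collar, so the coarea formula (Lemma \ref{l:3}), applied with $u=\psi$ and $g=|f|^2/|\bigtriangledown\psi|$, gives $\int_{\{\psi\ge-s\}}|f|^2=\int_{-s}^0 h(\tau)\,d\tau$ where $h(\tau):=\int_{\Gamma_\tau}\frac{|f|^2}{|\bigtriangledown\psi|}\,dH_1$. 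Hence a sequence $s_j\to0$ realizes the finite liminf, and averaging over $(-s_j,0)$ produces regular values $\tau_j\in(-s_j,0)$, $\tau_j\uparrow0$, with $h(\tau_j)\le M$ for a fixed constant $M$. Since $|\bigtriangledown\psi|$ is bounded above on the compact collar, this yields the uniform line-integral bound $\sup_j\int_{\Gamma_j}|f|^2\,dH_1<+\infty$, where $\Gamma_j:=\Gamma_{\tau_j}$.

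Next I would build the majorant. The domains $\Omega_j:=\{z\in D:\psi(z)<\tau_j\}$ increase to $D$ and have boundary $\Gamma_j$; let $U_j$ solve the Dirichlet problem on $\Omega_j$ with boundary data $|f|^2|_{\Gamma_j}$. Subharmonicity of $|f|^2$ gives $|f|^2\le U_j$, and the maximum principle gives $U_j\le U_{j+1}$ on $\Omega_j$, so $U:=\lim_j U_j$ is, by Harnack applied to the nonnegative harmonic functions $U_j$, either harmonic or identically $+\infty$; in the former case $U\ge|f|^2$ is the desired harmonic majorant and we are done.

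The one substantial point, and the main obstacle, is to rule out $U\equiv+\infty$, i.e.\ to bound $U_j(z_*)=\int_{\Gamma_j}|f|^2\,d\omega^{\Omega_j}_{z_*}$ at a fixed interior point $z_*$ uniformly in $j$. This requires comparing the harmonic measure $\omega^{\Omega_j}_{z_*}$ with arclength $H_1$ on $\Gamma_j$, namely a uniform bound $\frac{d\omega^{\Omega_j}_{z_*}}{dH_1}\le C$; granting it, $U_j(z_*)\le C\int_{\Gamma_j}|f|^2\,dH_1$ is bounded by the previous step, and the proof concludes. The density of harmonic measure equals $\frac1{2\pi}\partial_n g_{\Omega_j}(\cdot,z_*)$, and since $\Omega_j\nearrow D$ the Green functions $g_{\Omega_j}(\cdot,z_*)$ increase to $-G_D(\cdot,z_*)$, whose normal derivative is continuous and finite on $\partial D$ by analyticity of the boundary; the needed uniformity of the normal derivatives on the approximating curves $\Gamma_j$ follows from this boundary regularity, and Lemma \ref{l:1} can be invoked to sandwich $\Gamma_j$ between level curves of $G_D(\cdot,z_*)$, where the relevant estimates are classical. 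Verifying this uniform harmonic-measure comparison is where the analyticity of $\partial D$ and the $C^1$ control of $\psi$ are genuinely used.
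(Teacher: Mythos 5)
Your overall architecture --- solve the Dirichlet problem with boundary data $|f|^2$ on an increasing exhaustion, use subharmonicity and the maximum principle for monotonicity, apply Harnack's principle, and control the value at one interior point by combining the hypothesis with the coarea formula --- is exactly that of the paper's proof, and the first half of your argument (extracting a sequence of level curves $\Gamma_j$ with $\sup_j\int_{\Gamma_j}|f|^2\,dH_1<+\infty$) is sound. However, the step you yourself flag as ``the one substantial point'' is a genuine gap, not a routine verification. You exhaust $D$ by the sublevel sets $\Omega_j=\{\psi<\tau_j\}$ and need the uniform comparison $\frac{d\omega^{\Omega_j}_{z_*}}{dH_1}\le C$ on $\Gamma_j$. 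The curves $\Gamma_j$ are only $C^1$ (that is all the hypotheses on $\psi$ provide), and for merely $C^1$ domains the Poisson kernel relative to arclength need not be bounded at all (it lies in every $L^p$ with $p<\infty$, but an $L^\infty$ bound requires Dini-type regularity of the unit normal); a fortiori a bound uniform in $j$ is not available. The assertion that interior convergence $g_{\Omega_j}(\cdot,z_*)\to -G_D(\cdot,z_*)$ yields uniform control of normal derivatives on the moving boundaries $\Gamma_j$ does not follow: normal derivatives are boundary quantities, and interior convergence of Green functions gives no uniform information about them. Likewise, sandwiching $\Gamma_j$ between level curves of $G_D(\cdot,z_*)$ via Lemma \ref{l:1} compares sets, not harmonic measures supported on distinct curves.

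The paper circumvents this by choosing the one exhaustion for which the harmonic measure is explicit: it first uses Lemma \ref{l:1} to transfer hypothesis \eqref{eq:0708a} from superlevel sets of $\psi$ to superlevel sets of $G_D(\cdot,z_0)$, and then exhausts by $D_r=\{e^{G_D(\cdot,z_0)}<r\}$. Since $G_{D_r}(\cdot,z_0)=G_D(\cdot,z_0)-\log r$, the density of harmonic measure at $z_0$ on $\partial D_r$ is $\frac{1}{2\pi}\frac{\partial G_D(z,z_0)}{\partial v_z}$, which is uniformly bounded above and below near $\partial D$ by the analyticity of the boundary (via Lemma \ref{l:4}); the coarea formula (Lemma \ref{l:3}) applied to $e^{G_D(\cdot,z_0)}$ then converts the transferred hypothesis into the required uniform bound on the Dirichlet solutions at $z_0$. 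Your proof closes if you replace your $\Omega_j$ by these $D_r$ --- equivalently, observe that your $U_j$ is dominated on $\Omega_j$ by any harmonic function on a larger $D_r\supset\Omega_j$ that majorizes $|f|^2$ there, so it suffices to bound the Dirichlet solutions on the $D_r$.
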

\begin{proof}
	It follows from Lemma \ref{l:1} and inequality \eqref{eq:0708a} that 
	\begin{equation}
		\label{eq:0708b}\begin{split}
					&\liminf_{r\rightarrow1-0}\frac{\int_{\{z\in D:e^{G_D(z,z_0)}\ge r\}}|f(z)|^2}{1-r}\\
					\le&		\liminf_{r\rightarrow1-0}\frac{\int_{\{z\in D:\psi(z)\ge C\log r\}}|f(z)|^2}{1-r}\\
					=&\liminf_{r\rightarrow1-0}\frac{\int_{\{z\in D:\psi(z)\ge C\log r\}}|f(z)|^2}{1-r^{C}}\times\frac{1-r^{C}}{1-r}\\
					<&+\infty.
		\end{split}
	\end{equation}
	Denote that $$D_r:=\{z\in D:e^{G_D(z,z_0)}<r\},$$
	 where $r\in(0,1)$. It is well-known that $G_D(\cdot,z_0)-\log r$ is the Green function on $D_r$.
	By the analyticity of the boundary of $D$, we have $G_D(z,w)$ has an analytic extension on $U\times V\backslash\{z=w\}$ and $\frac{\partial G_D(z,z_0)}{\partial v_z}$ is positive and smooth on $\partial D$, where $U$ is an neighborhood of $\overline{D}$ and $V\Subset D$. Then there exist $r_0\in(0,1)$ and $C_1>0$ such that  $\frac{1}{C_1}\le|\bigtriangledown G_D(\cdot,z_0)|\le C_1$ on $\{z\in D:G_D(z,z_0)>\log r_0\}$, which implies 
	\begin{equation}
		\label{eq:0709b}\frac{1}{C_1}\le\frac{\partial G_{D}(z,z_0)}{\partial v_z}\le C_1
	\end{equation}
	holds on $\{z\in D:G_D(z,z_0)>\log r_0\}$ (by using Lemma \ref{l:4}).

	Denote that $$v_r(w):=\frac{1}{2\pi}\int_{\partial D_r}|f|^2\frac{\partial G_{D_r}(z,w)}{\partial v_z}|dz|$$
is a harmonic function on $D_r$, where $r\in(r_0,1)$.  As $G_{D_r}(z,z_0)=G_{D}(z,z_0)-\log r$, we have 
\begin{equation}
	\label{eq:0708c}v_r(z_0)=\frac{1}{2\pi}\int_{\partial D_r}|f|^2\frac{\partial G_{D}(z,z_0)}{\partial v_z}|dz|.
\end{equation}
	Fixed $r_1\in(r_0,1)$,  inequality \eqref{eq:0709b} implies that 
	\begin{equation}
		\label{eq:0709a}\begin{split}
			v_{r_1}(z_0)&\le v_r(z_0)\\
			&= \frac{1}{2\pi}\int_{\partial D_r}|f|^2\frac{\partial G_{D}(z,z_0)}{\partial v_z}|dz|\\
			&\le C_2 \int_{\partial D_r}|f|^2\left(\frac{\partial G_{D}(z,z_0)}{\partial v_z}\right)^{-1}|dz|
		\end{split}
	\end{equation}
holds	for any $r\in(r_1,1)$, where $C_2$ is a positive constant independent of $r_1$ and $r$. Using Lemma \ref{l:4}, Lemma \ref{l:3} and inequality \eqref{eq:0708b}, we have
\begin{equation}
	\label{eq:0709c}
	\begin{split}
		v_{r_1}(z_0)&\le\liminf_{r\rightarrow1-0}\frac{C_2 \int_{r}^1\left(\int_{\partial D_s}|f|^2\left(\frac{\partial G_{D}(z,z_0)}{\partial v_z}\right)^{-1}|dz|\right)ds}{1-r}\\
		&=\liminf_{r\rightarrow1-0}\frac{C_2 \int_{r}^1\left(\int_{\{e^{G_{D}(\cdot,z_0)}=s\}}|f|^2e^{G_{D}(z,z_0)}|\bigtriangledown e^{G_{D}(z,z_0)}|^{-1}|dz|\right)ds}{1-r}\\
		&=\liminf_{r\rightarrow1-0}\frac{C_2 \int_{\{z\in D:e^{G_{D}(z,z_0)}>r\}}|f|^2e^{G_{D}(z,z_0)}}{1-r}\\
		&\le C_3,
	\end{split}
\end{equation}
	where $C_3$ is a positive constant independent of $r_1$. As $|f|^2$ is subharmonic, we have  $|f|^2\le v_r$ on $D_r$ and $\{v_r\}$ is increasing with respect to $r$. By Harnack's principle (see \cite{ahlfors}), the sequence $\{v_r\}$ converges to a harmonic function $v$ on $D$, which satisfies that $|f(z)|^2\le v(z)$ for any $z\in D$. Thus, $f\in H^{(c)}_2(D)$.
	\end{proof}
	
	\subsection{Concavity property of minimal $L^2$ integrals} In this section, we recall the concavity property of minimal $L^2$ integrals on open Riemann surfaces and a characterization for the concavity degenerating to linearity (\cite{GY-concavity}, see also \cite{GMY-concavity2,GY-concavity3}).
	
	Let $D$ be a planar regular region with $n$ boundary components which are analytic Jordan curves.
	Let $\psi$ be a negative subharmonic function on $D$,
and let  $\varphi$ be a Lebesgue measurable function on $D$,
such that $\varphi+\psi$ is a plurisubharmonic function on $D$.

Let $z_0\in D$  such that $\mathcal{I}(\varphi+\psi)_{z_0}\not=\mathcal{O}_{z_0}$, where $\mathcal{I}(\varphi+\psi)$ is the multiplier ideal sheaf, which is the sheaf of germs of holomorphic functions $h$ such that $|h|^2e^{-\varphi-\psi}$ is locally integrable.
Let $f$ be a holomorphic function on a neighborhood of $z_0$.
Let $\mathcal{F}_{z_0}\supseteq\mathcal{I}(\varphi+\psi)_{z_0}$ be an  ideal of $\mathcal{O}_{z_0}$.

Denote
\begin{equation*}
\begin{split}
\inf\bigg\{\int_{\{\psi<-t\}}|\tilde{f}|^{2}e^{-\varphi}c(-\psi):(\tilde{f}-f,z_0)\in \mathcal{F}_{z_0}\,\&{\,}\tilde{f}\in \mathcal{O} (\{\psi<-t\})\bigg\},
\end{split}
\end{equation*}
by $G(t;c)$ (without misunderstanding, we denote $G(t;c)$ by $G(t)$),  where $t\in[0,+\infty)$ and  $c$ is a nonnegative measurable function on $(0,+\infty)$.

Let $c$ be
 a positive measurable function $c$  on $(0,+\infty)$, which satisfies that 
$c(t)e^{-t}$ is decreasing with respect to $t$, $\int_0^{+\infty}c(s)e^{-s}ds<+\infty$ and $e^{-\varphi}c(-\psi)$ has a positive lower bound on any compact subset of $D\backslash Z$, where $Z\subset\{\psi=-\infty\}$ is a discrete subset of $M$.	

 We recall some results about the concavity for $G(t)$, which will be used in the proof of Theorem \ref{main theorem}.

\begin{Theorem}[\cite{GY-concavity}]
\label{thm:general_concave}
 $G(h^{-1}(r))$ is concave with respect to $r\in(0,\int_{0}^{+\infty}c(s)e^{-s}ds)$, $\lim_{t\rightarrow T+0}G(t)=G(0)$ and $\lim_{t\rightarrow +\infty}G(t)=0$, where $h(t)=\int_{t}^{+\infty}c(s)e^{-s}ds$.
\end{Theorem}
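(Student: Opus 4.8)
\textbf{Plan for proving Theorem \ref{thm:general_concave}.}
The goal is to establish three facts about the minimal $L^2$ integral $G(t)=G(t;c)$: concavity of $G(h^{-1}(r))$ in $r$, the boundary limit $\lim_{t\to T+0}G(t)=G(0)$ (here $T=0$ for the setup at hand, so this reads $\lim_{t\to 0+0}G(t)=G(0)$), and the decay $\lim_{t\to+\infty}G(t)=0$. The heart of the matter is the concavity statement, which is an $L^2$-extension phenomenon: one must show that an optimal solution on the larger domain $\{\psi<-t_1\}$ can be compared, with a quantitatively sharp constant, against solutions on the smaller domain $\{\psi<-t_2\}$ for $t_2>t_1$. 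The natural tool is a Hörmander-type $L^2$ estimate with a carefully chosen weight, or equivalently the optimal $L^2$-extension method of Ohsawa--Takegoshi type as packaged in the concavity machinery. The change of variable $t=h^{-1}(r)$, with $h(t)=\int_t^{+\infty}c(s)e^{-s}\,ds$ strictly decreasing, is engineered precisely so that the density $c(-\psi)e^{-\varphi}$ matches the curvature term and turns the a priori nonlinear comparison into an affine-in-$r$ one.

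First I would reduce concavity to a local two-point inequality: fix $0\le t_1<t_2<t_3$ and prove
\begin{equation*}
G(t_2)\le \frac{h(t_1)-h(t_2)}{h(t_1)-h(t_3)}G(t_3)+\frac{h(t_2)-h(t_3)}{h(t_1)-h(t_3)}G(t_1),
\end{equation*}
which is exactly the statement that $r\mapsto G(h^{-1}(r))$ lies below its chords. To produce this, I would take a nearly optimal $\tilde f_3\in\mathcal O(\{\psi<-t_3\})$ achieving $G(t_3)$ and a nearly optimal $\tilde f_1$ on $\{\psi<-t_1\}$, and build a test function on $\{\psi<-t_2\}$ by gluing them across the annular region $\{-t_3<\psi<-t_1\}$ using a cutoff in the variable $\psi$. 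The resulting $\bar\partial$-error is controlled by a weighted $L^2$-estimate on the sublevel set, where the weight is adapted to $c(-\psi)e^{-\varphi}$; solving the $\bar\partial$-equation and correcting the glued function so that it still matches $f$ modulo $\mathcal F_{z_0}$ yields the chord inequality after optimizing the cutoff. The decreasing hypothesis on $c(t)e^{-t}$ is what guarantees the relevant integrals converge and the weight is plurisubharmonic, and the positive-lower-bound hypothesis on $e^{-\varphi}c(-\psi)$ ensures the extension functions are genuinely $L^2$ away from $Z$.

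For the limit $\lim_{t\to+\infty}G(t)=0$: as $t\to+\infty$ the domain $\{\psi<-t\}$ shrinks toward the polar set $\{\psi=-\infty\}$, and since $\int_0^{+\infty}c(s)e^{-s}\,ds<+\infty$ forces $h(t)\to 0$, the available ``mass'' for any competitor tends to $0$; combined with the concavity-derived monotonicity of $G$ and the fact that $G(t)\le \frac{h(t)}{h(0)}G(0)$ (a consequence of the chord inequality with $t_1=0$ and $t_3\to+\infty$), one gets $G(t)\to 0$. For $\lim_{t\to 0+0}G(t)=G(0)$ I would argue by semicontinuity: any sequence of near-minimizers on $\{\psi<-t\}$ as $t\downarrow 0$ can be extracted to a weak limit on $\{\psi<0\}=D$ by a normal-families/Montel argument together with uniform $L^2$ bounds, and lower semicontinuity of the $L^2$ norm under weak limits gives $\liminf_{t\to 0+}G(t)\ge G(0)$, while monotonicity gives the reverse inequality. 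The main obstacle is the concavity step: constructing the gluing and executing the $\bar\partial$-correction with a constant sharp enough to yield a genuine chord bound (rather than a lossy one) is delicate, and this is exactly the content cited from \cite{GY-concavity}; the limit statements are comparatively routine once the chord inequality is in hand.
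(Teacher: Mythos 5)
First, a point of orientation: Theorem \ref{thm:general_concave} is quoted from \cite{GY-concavity}; this paper contains no proof of it, so your plan has to be measured against the proof in the cited source. Your overall strategy---reduce concavity to a chord (difference-quotient) inequality for $G$ in the variable $r=h(t)$, prove it by a cutoff in $\psi$ together with a sharp weighted $\bar\partial$-estimate of Ohsawa--Takegoshi/Guan--Zhou type, and handle the two limit statements by routine compactness and convergence arguments---is indeed the circle of ideas used there. But there are two concrete problems. The first is structural: in the actual proof one does \emph{not} glue two near-minimizers $\tilde f_1$, $\tilde f_3$ across the annulus $\{-t_3<\psi<-t_1\}$. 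Instead, only the exact minimizer $F_{t_0}$ on the inner set $\{\psi<-t_0\}$ is extended to $\{\psi<-t_1\}$ with the optimal estimate, giving an upper bound for $G(t_1)-G(t_0)$ by $\bigl(h(t_1)-h(t_0)\bigr)$ times an infinitesimal annulus mass of $F_{t_0}$ (a limit as $B\rightarrow 0^+$ over $\{-t_0-B\le\psi<-t_0\}$); concavity then follows by comparing difference quotients, and the decisive cancellations come from the orthogonality equality of Lemma \ref{l:unique}, which holds for exact minimizers. If you glue two different near-minimizers, the quantity to be estimated contains a cross term between $\tilde f_1$ and $\tilde f_3$ on the transition region which your plan does not control, and Lemma \ref{l:unique} is not available for near-minimizers; so the two-sided gluing, as described, has a genuine gap beyond ``the constant is delicate''.

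The second problem is the argument for $\lim_{t\rightarrow+\infty}G(t)=0$, which is both backwards and circular. For a nonnegative concave function $g=G\circ h^{-1}$ with $g(0^+)=0$, concavity gives $g(r)\ge\frac{r}{r_0}g(r_0)$, i.e.\ $G(t)\ge\frac{h(t)}{h(0)}G(0)$; your claimed inequality $G(t)\le\frac{h(t)}{h(0)}G(0)$ is the wrong direction in general (it holds only in the linear case, cf.\ Corollary \ref{c:linear}), and in any case extracting a chord bound through $r=0$ presupposes exactly the limit $G(t)\rightarrow0$ that you are trying to prove. The correct argument is simpler and needs no concavity: if $G(t_0)<+\infty$ for some $t_0$, restrict the minimizer $F_{t_0}$ to $\{\psi<-t\}$; since the sets $\{\psi<-t\}$ decrease to $\{\psi=-\infty\}$, which has Lebesgue measure zero because $\psi\not\equiv-\infty$ is subharmonic, dominated convergence gives $G(t)\le\int_{\{\psi<-t\}}|F_{t_0}|^2e^{-\varphi}c(-\psi)\rightarrow0$ --- note this has nothing to do with $h(t)\rightarrow0$, contrary to your ``available mass'' heuristic. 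Your treatment of $\lim_{t\rightarrow0^+}G(t)=G(0)$ (monotonicity of $G$ for one inequality; Montel plus Fatou-type lower semicontinuity, together with closedness of the constraint $(\tilde f-f,z_0)\in\mathcal F_{z_0}$ under locally uniform limits, for the other) does match the cited proof, and your reading $T=0$ in the present setting is correct.
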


\begin{Lemma}
	[\cite{GY-concavity}]\label{l:unique}  There exists a unique holomorphic function $F$ on $\{\psi<-t\}$ satisfying $(F-f,z_0)\in \mathcal F_{z_0}$ and $G(t;c)=\int_{\{\psi<-t\}}|F|^2e^{-\varphi}c(-\psi)$. Furthermore,
for any holomorphic function $\hat{F}$ on $\{\psi<-t\}$ satisfying $(\hat{F}-f,z_0)\in \mathcal{F}_{z_0}$ and
$\int_{\{\psi<-t\}}|\hat{F}|^{2}e^{-\varphi}c(-\psi)<+\infty$,
we have the following equality
\begin{equation*}
\begin{split}
&\int_{\{\psi<-t\}}|F_{t}|^{2}e^{-\varphi}c(-\psi)+\int_{\{\psi<-t\}}|\hat{F}-F_{t}|^{2}e^{-\varphi}c(-\psi)
\\=&
\int_{\{\psi<-t\}}|\hat{F}|^{2}e^{-\varphi}c(-\psi).
\end{split}
\end{equation*}
\end{Lemma}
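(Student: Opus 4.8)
The plan is to read Lemma \ref{l:unique} as a minimal-norm problem in a weighted Bergman-type Hilbert space and to settle it by the orthogonal projection theorem. Fix $t\ge 0$ and write $\Omega:=\{\psi<-t\}$. I introduce
$$\mathcal H_t:=\left\{g\in\mathcal O(\Omega):\int_{\Omega}|g|^2e^{-\varphi}c(-\psi)<+\infty\right\},$$
with inner product $\langle g_1,g_2\rangle:=\int_{\Omega}g_1\overline{g_2}\,e^{-\varphi}c(-\psi)$. The first task is to verify that $\mathcal H_t$ is complete. Given a Cauchy sequence $\{g_n\}$, on any compact $K\Subset\Omega\setminus Z$ the weight $e^{-\varphi}c(-\psi)$ is bounded below by a positive constant, so the norm controls the ordinary $L^2(K)$-norm, and the sub-mean-value inequality for $|g_n|^2$ upgrades this to a sup bound; hence $\{g_n\}$ converges locally uniformly on $\Omega\setminus Z$ to a function $g$ holomorphic there. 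Since each $g_n$ is holomorphic on all of $\Omega$ (in particular across the isolated points of $Z$), the maximum principle propagates the uniform bound from a small circle around each $p\in Z$ (a compact subset of $\Omega\setminus Z$) into the enclosed disc; thus $\{g_n\}$ is locally uniformly bounded near $Z$ and $g$ extends holomorphically across $Z$ by Riemann's removable singularity theorem. Fatou's lemma then gives $g\in\mathcal H_t$ and $\|g_n-g\|\to0$, so $\mathcal H_t$ is a Hilbert space.

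Next I realize the admissible class as a closed affine subspace. Since $v(dd^c\psi,z_0)>0$ forces $\psi(z_0)=-\infty$, we have $z_0\in\Omega$, and the jet-evaluation functionals $g\mapsto g^{(j)}(z_0)$ are continuous on $\mathcal H_t$ by the Cauchy integral formula over a small circle about $z_0$ together with the norm bound just described. Because $\Omega$ is one-dimensional, $\mathcal O_{z_0}$ is a discrete valuation ring, so the ideal $\mathcal F_{z_0}$ coincides with the germs vanishing to some fixed order $k$, i.e. those generated by $(z-z_0)^k$. Consequently $A_0:=\{g\in\mathcal H_t:(g,z_0)\in\mathcal F_{z_0}\}$ is cut out by the finitely many continuous conditions $g(z_0)=\cdots=g^{(k-1)}(z_0)=0$ and is therefore a closed linear subspace, and $A:=\{\tilde f\in\mathcal H_t:(\tilde f-f,z_0)\in\mathcal F_{z_0}\}$ is a closed affine subspace (a translate of $A_0$ whose shift is fixed by the prescribed $k$-jet of $f$).

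Assuming $G(t;c)<+\infty$, so that $A\neq\emptyset$ (otherwise the statement is vacuous), I apply the projection theorem: a nonempty closed convex subset of a Hilbert space contains a unique element of minimal norm. This yields a unique $F=F_t\in A$ with $\|F\|^2=G(t;c)$, characterized by the orthogonality relation $F\perp A_0$. For any competitor $\hat F\in A$ with $\|\hat F\|<+\infty$, the difference $\hat F-F$ lies in $A_0$, hence is orthogonal to $F$, and Pythagoras gives
$$\|\hat F\|^2=\|F\|^2+\|\hat F-F\|^2,$$
which is exactly the claimed identity.

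The main obstacle is the completeness step, specifically controlling holomorphic functions near the degeneracy locus $Z$, where the weight need not be bounded below. The device that overcomes it is that the elements of $\mathcal H_t$ are genuinely holomorphic across $Z$, so the maximum principle transfers the uniform norm-bounds available on compact subsets of $\Omega\setminus Z$ into the discs surrounding the points of $Z$; once local uniform boundedness is secured, normal families and removable singularities finish the argument. Continuity of the jet functionals at $z_0$ rests on the same norm-to-sup estimate.
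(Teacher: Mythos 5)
Your argument is correct and is essentially the proof given in the cited source \cite{GY-concavity} (the present paper only quotes this lemma without reproving it): one realizes $G(t;c)$ as a minimal-norm problem over a closed affine subspace of the weighted Bergman-type Hilbert space on $\{\psi<-t\}$ — completeness coming from the positive lower bound of $e^{-\varphi}c(-\psi)$ on compact subsets of $\{\psi<-t\}\setminus Z$ together with the maximum principle to control the discs around the discrete set $Z$ — and existence, uniqueness and the Pythagorean identity all follow from the projection theorem, equivalently from the first-variation orthogonality $F_t\perp A_0$ that the reference derives by perturbing $F_t$ by $\alpha(\hat F-F_t)$. The only caveat, which you correctly flag, is that the statement is to be read under the standing assumption $G(t;c)<+\infty$ (so that the admissible class is nonempty and uniqueness is meaningful).
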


We recall a necessary condition and a characterization of the concavity degenerating to linearity.

\begin{Corollary}[\cite{GY-concavity}]	\label{c:linear}
 If $G( {h}^{-1}(r))$ is linear with respect to $r\in[0,\int_{0}^{+\infty}c(s)e^{-s}ds)$, where $ {h}(t)=\int_{t}^{+\infty}c(s)e^{-s}ds$,
 then there is a unique holomorphic function $F$ on $D$ satisfying that $(F-f,z_0)\in \mathcal F_{z_0}$ and $G(t;c)=\int_{\{\psi<-t\}}|F|^2e^{-\varphi}c(-\psi)$ for any $t\geq 0$. Furthermore,
\begin{equation}
	\label{eq:20210412b}
	\int_{\{-t_1\leq\psi<-t_2\}}|F|^2e^{-\varphi}a(-\psi)=\frac{G(0;c)}{\int_{0}^{+\infty}c(s)e^{-s}ds}\int_{t_2}^{t_1} a(t)e^{-t}dt
\end{equation}
for any nonnegative measurable function $a$ on $(0,+\infty)$, where $+\infty\geq t_1>t_2\geq 0$.
\end{Corollary}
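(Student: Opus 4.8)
The plan is to read the hypothesis as a rigidity statement. Theorem \ref{thm:general_concave} tells us that $r\mapsto G(h^{-1}(r))$ is concave on $(0,\int_0^{+\infty}c(s)e^{-s}ds)$ with endpoint limits $G(0)$ (as $r$ approaches $\int_0^{+\infty}c(s)e^{-s}ds$, i.e.\ $t\to 0+$) and $0$ (as $r\to 0+$, i.e.\ $t\to +\infty$), so the only way a concave function can be linear on the whole interval is to coincide with the chord through these two endpoints. Writing $A:=\int_0^{+\infty}c(s)e^{-s}ds$ and $C:=G(0)/A$, linearity therefore forces
$$G(t)=C\,h(t)=\frac{G(0)}{\int_0^{+\infty}c(s)e^{-s}ds}\int_t^{+\infty}c(s)e^{-s}ds\qquad\text{for all }t\ge 0.$$
This is the first thing I would record, since it converts the qualitative hypothesis into an explicit formula for $G$ along the whole ray.

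Next I would produce the global minimizer and show it is optimal at every level at once. For each $t\ge 0$, Lemma \ref{l:unique} supplies a unique minimizer $F_t$ on $\{\psi<-t\}$; I set $F:=F_0$, the minimizer on $D=\{\psi<0\}$. Its restriction to $\{\psi<-t\}$ is admissible there, so Lemma \ref{l:unique} gives the orthogonal decomposition
$$\int_{\{\psi<-t\}}|F|^2e^{-\varphi}c(-\psi)=G(t)+\int_{\{\psi<-t\}}|F-F_t|^2e^{-\varphi}c(-\psi).$$
Thus $F$ is the minimizer at level $t$, i.e.\ $F_t=F|_{\{\psi<-t\}}$, exactly when the defect $\int_{\{\psi<-t\}}|F-F_t|^2e^{-\varphi}c(-\psi)$ vanishes, and uniqueness of the global $F$ is inherited from the $t=0$ uniqueness in Lemma \ref{l:unique}. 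The first conclusion of the Corollary is precisely that all these defects are zero.

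Here is the main obstacle, and where I expect the real work to lie: using Theorem \ref{thm:general_concave} and Lemma \ref{l:unique} only as black boxes is not enough, because every natural scalar identity one can write down (for instance equating $\int_0^{+\infty}G(t)w(t)\,dt$ with the corresponding integral of $\int_{\{\psi<-t\}}|F|^2e^{-\varphi}c(-\psi)$, for a positive weight $w$) turns out, after Fubini, to be equivalent to the desired conclusion rather than an independent equation. The vanishing of the defects must instead be extracted from the mechanism behind the concavity in Theorem \ref{thm:general_concave}: that proof realizes the strict-concavity (second-order) defect of $G(h^{-1}(\cdot))$ as a positive multiple of squared $L^2$-norms of differences of the associated minimizers at nearby levels. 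Linearity annihilates the strict-concavity defect on the entire interval, hence forces these squared norms to vanish for all (say dyadic) levels; by continuity of the minimizers in $t$ one then obtains $F_t=F|_{\{\psi<-t\}}$ for every $t\ge 0$, so the $F_t$ glue to the single global $F$ in the statement. I would isolate this as a lemma abstracted from the proof of Theorem \ref{thm:general_concave}, and this step is the crux of the whole argument.

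Finally, granted that $F$ is minimal at every level, the ``Furthermore'' part is a clean measure-theoretic bootstrap. Minimality at all levels makes the defect above vanish identically, so $\int_{\{\psi<-t\}}|F|^2e^{-\varphi}c(-\psi)=G(t)=C\,h(t)$ for every $t\ge 0$. Let $\nu$ be the push-forward of the measure $|F|^2e^{-\varphi}$ under the map $-\psi:D\to(0,+\infty)$; then this identity reads $\int_{(t,+\infty)}c\,d\nu=\int_{(t,+\infty)}c(s)\,Ce^{-s}\,ds$ for all $t\ge 0$. Because the half-lines $(t,+\infty)$ generate the Borel $\sigma$-algebra and $c>0$, this forces $d\nu=Ce^{-s}\,ds$ on $(0,+\infty)$. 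Pushing any nonnegative measurable $a$ forward and restricting to the band $\{-t_1\le\psi<-t_2\}=\{t_2<-\psi\le t_1\}$ then gives
$$\int_{\{-t_1\le\psi<-t_2\}}|F|^2e^{-\varphi}a(-\psi)=\int_{(t_2,t_1]}a\,d\nu=C\int_{t_2}^{t_1}a(t)e^{-t}\,dt,$$
which is exactly \eqref{eq:20210412b} with $C=G(0)/\int_0^{+\infty}c(s)e^{-s}ds$; the endpoint cases $t_2=0$ and $t_1=+\infty$ I would treat separately as harmless limits.
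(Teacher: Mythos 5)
First, a point of comparison: this paper does not prove Corollary \ref{c:linear} at all --- it is quoted verbatim from \cite{GY-concavity} --- so your attempt can only be measured against the cited source, not an in-paper argument. Within your proposal, three of the four steps are sound: the chord argument identifying $G(t)=\frac{G(0)}{\int_0^{+\infty}c(s)e^{-s}ds}\,h(t)$ from linearity together with the endpoint limits in Theorem \ref{thm:general_concave}; the reduction, via the orthogonality in Lemma \ref{l:unique} with $F:=F_0$, of the first conclusion to the vanishing of the defects $\int_{\{\psi<-t\}}|F_0-F_t|^2e^{-\varphi}c(-\psi)$, with uniqueness of $F$ inherited from the $t=0$ case of Lemma \ref{l:unique}; and the push-forward/$\pi$-system argument for \eqref{eq:20210412b} once $G(t)=\int_{\{\psi<-t\}}|F|^2e^{-\varphi}c(-\psi)$ is known for every $t$ (two finite measures agreeing on all half-lines $(t,+\infty)$ coincide, and $c>0$ lets you divide out the gain; the endpoint cases are indeed harmless).

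The genuine gap is exactly the step you yourself flag as the crux, and it is not filled. You do not prove that the defects vanish; you conjecture that the proof of Theorem \ref{thm:general_concave} ``realizes the strict-concavity defect as a positive multiple of squared $L^2$-norms of differences of the minimizers'' and propose to ``isolate this as a lemma.'' No such second-order identity is established in your text, and it is not what the cited concavity proof supplies: there, concavity is obtained from an Ohsawa--Takegoshi-type extension estimate controlling difference quotients of $G\circ h^{-1}$ --- first-order information --- not from an exact identity expressing the concavity defect as $\|F_t-F_{t'}\|^2$; your fallback ``by continuity of the minimizers in $t$'' is likewise unproven. Note what your scheme actually needs: a \emph{lower} bound on the annulus mass of the minimizer, namely $\int_{\{-t\le\psi<0\}}|F_0|^2e^{-\varphi}c(-\psi)\ge \frac{G(0)}{\int_0^{+\infty}c(s)e^{-s}ds}\int_0^{t}c(s)e^{-s}ds$, whereas everything the two black boxes give points the other way (restriction gives the tail lower bound $\int_{\{\psi<-t\}}|F_0|^2e^{-\varphi}c(-\psi)\ge G(t)$, i.e.\ an annulus \emph{upper} bound, and concavity gives upper bounds $G(t_2)\le \frac{h(t_2)}{h(t_1)}G(t_1)$). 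That is precisely why \cite{GY-concavity} re-enters the extension machinery behind the concavity theorem (equality analysis in the underlying estimate, together with arguments varying the gain $c$) instead of using Theorem \ref{thm:general_concave} and Lemma \ref{l:unique} as black boxes. As written, your proposal proves the ``Furthermore'' identity \emph{conditional} on the first assertion of the corollary, but the first assertion is assumed, not proved.
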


\begin{Theorem}[\cite{GY-concavity}, see also \cite{GY-concavity3}]
	\label{thm:m-points}
 Assume that  one of the following two statements holds:
 
  $(a)$ $(\psi-2p_0G_{D}(\cdot,z_0))(z_0)>-\infty$, where $p_0=\frac{1}{2}v(dd^c(\psi),z_0)>0$;
  
  $(b)$ $\varphi+a\psi$ is subharmonic near $z_0$ for some $a\in[0,1)$.
  
   Then $G(h^{-1}(r))$ is linear with respect to $r$ if and only if the following statements hold:
	
	$(1)$ $\psi=2p_0G_{D}(\cdot,z_0)$, where $p_0=\frac{1}{2}v(dd^c(\psi),z_0)>0$;
	
	$(2)$ $\varphi+\psi=2\log|g|+2G_{D}(\cdot,z_0)+2u$ and $\mathcal{F}_{z_0}=\mathcal{I}(\varphi+\psi)_{z_0}$, where $g$ is a holomorphic function on $D$ such that $ord_{z_0}(g)=ord_{z_0}(f)$ and $u$ is a harmonic function on $D$;
	
	$(3)$ $\chi_{z_0}=\chi_{-u}$, where $\chi_{-u}$ and $\chi_{z_0}$ are the  characters associated to the functions $-u$ and $G_{D}(\cdot,z_0)$ respectively.
\end{Theorem}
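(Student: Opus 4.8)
The plan is to prove the biconditional by treating the two implications separately, with the concavity statement of Theorem \ref{thm:general_concave} as the standing background and Corollary \ref{c:linear} as the main gateway for the ``only if'' direction. Throughout I would work on the universal covering $p:\Delta\to D$, where the Green function $G_D(\cdot,z_0)$ lifts to the multiplicative function $\log|f_{z_0}|$ and a harmonic function $u$ lifts to $\log|f_u|$; the characters $\chi_{z_0},\chi_{-u}$ record the failure of these lifts to descend to $D$.

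For the sufficiency (``if'') direction I would start from conditions $(1)$--$(3)$ and build the extremal explicitly. Using $(1)$ $\psi=2p_0G_D(\cdot,z_0)$ and $(2)$ $\varphi+\psi=2\log|g|+2G_D(\cdot,z_0)+2u$, one rewrites $e^{-\varphi}c(-\psi)$ entirely in terms of $|g|$, $e^{-2G_D(\cdot,z_0)}$ and $e^{-2u}$, so that $\{\psi<-t\}=\{G_D(\cdot,z_0)<-t/(2p_0)\}$ and the weight becomes radial in $G_D(\cdot,z_0)$. Condition $(3)$, $\chi_{z_0}=\chi_{-u}$, is exactly what makes the candidate minimizer (morally $c_1\,g\,(p_*f_{z_0})'\,p_*f_u$, in the spirit of Remark \ref{rem:function}) single-valued on $D$ and admissible, with $(F-f,z_0)\in\mathcal F_{z_0}=\mathcal I(\varphi+\psi)_{z_0}$. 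A direct computation of $G(t)=\int_{\{\psi<-t\}}|F|^2e^{-\varphi}c(-\psi)$, slicing along the level sets of $G_D(\cdot,z_0)$ via the coarea formula (Lemma \ref{l:3}), then yields $G(t)=C_0\,h(t)$ with $h(t)=\int_t^{+\infty}c(s)e^{-s}ds$, whence $G(h^{-1}(r))=C_0 r$ is linear.

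For the necessity (``only if'') direction, linearity of $G(h^{-1}(r))$ feeds into Corollary \ref{c:linear}: it produces a single holomorphic $F$ on all of $D$ with $(F-f,z_0)\in\mathcal F_{z_0}$ attaining $G(t)$ for every $t$, together with the mass identity \eqref{eq:20210412b} valid for every nonnegative measurable $a$. Reading \eqref{eq:20210412b} as a statement about pushforward measures, it says precisely that $(-\psi)_*\big(|F|^2e^{-\varphi}\big)=C_0e^{-t}\,dt$ on $(0,+\infty)$; combined with Lemma \ref{l:3} this pins down the level-set integrals $\int_{\{\psi=-t\}}|F|^2e^{-\varphi}|\bigtriangledown\psi|^{-1}\,dH_1=C_0e^{-t}$. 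I would then extract $(1)$--$(3)$ in order: first show $\psi=2p_0G_D(\cdot,z_0)$; then, substituting this back, identify $e^{-\varphi-\psi}$ with $|g|^{-2}e^{-2G_D(\cdot,z_0)-2u}$ for some holomorphic $g$ with $\mathrm{ord}_{z_0}(g)=\mathrm{ord}_{z_0}(f)$ and some harmonic $u$, which is condition $(2)$ together with $\mathcal F_{z_0}=\mathcal I(\varphi+\psi)_{z_0}$; finally, single-valuedness of $F$ against this explicit form forces the character balance $\chi_{z_0}=\chi_{-u}$, condition $(3)$.

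The main obstacle is the first extraction, $\psi=2p_0G_D(\cdot,z_0)$, and this is where hypotheses $(a)$/$(b)$ are indispensable. The pushforward identity forces the sublevel sets $\{\psi<-t\}$ to carry the extremal mass in exactly the radial pattern that the Green function's sublevel sets would; to upgrade this from a statement about integrated level-set mass to the pointwise equality $\psi=2p_0G_D(\cdot,z_0)$ I would compare $\psi$ with $2p_0G_D(\cdot,z_0)$ as negative subharmonic functions sharing the Lelong number $2p_0$ at $z_0$. Under $(a)$ the finiteness $(\psi-2p_0G_D(\cdot,z_0))(z_0)>-\infty$ controls the singularity so that $\psi-2p_0G_D(\cdot,z_0)$ extends across $z_0$ and a maximum-principle argument closes the gap; under $(b)$ the subharmonicity of $\varphi+a\psi$ near $z_0$ supplies the missing integrability input. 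Establishing this rigidity---essentially that equality in the underlying $L^2$ concavity propagates to a pointwise identification of the weight $\psi$---is the technically heaviest step, after which conditions $(2)$ and $(3)$ follow by comparatively routine substitution and lifting.
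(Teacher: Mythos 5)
A preliminary point: the paper does not prove Theorem \ref{thm:m-points} at all --- it is imported verbatim from \cite{GY-concavity,GY-concavity3} as a preparatory result --- so there is no in-paper proof to compare your proposal against; it can only be judged against what a complete proof must accomplish, and on that count it has two genuine gaps. In the sufficiency direction, computing $\int_{\{\psi<-t\}}|F|^2e^{-\varphi}c(-\psi)=C_0h(t)$ for your explicit candidate via the coarea formula only yields the upper bound $G(t)\le C_0h(t)$; it does not show that $F$ attains the infimum defining $G(t)$, and without extremality you cannot conclude linearity. (Given Theorem \ref{thm:general_concave}, concavity together with $\lim_{t\to+\infty}G(t)=0$ gives $G(h^{-1}(r))\ge rG(0)/h(0)$, so it would in fact suffice to prove extremality at $t=0$; but that is precisely the equality part of a weighted Suita-type statement, which in \cite{GY-concavity,GY-concavity3,guan-zhou13ap} is established by an orthogonality computation in the spirit of Lemma \ref{l:unique} --- showing $\int\tilde F\overline{F}e^{-\varphi}c(-\psi)$ is the same for every admissible competitor $\tilde F$, via fiberwise mean-value integration along the level curves of $G_D(\cdot,z_0)$ on the covering, which is also where condition $(3)$ is genuinely used.) Your outline asserts admissibility of the candidate but nowhere addresses its minimality.

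In the necessity direction, the step you yourself flag as heaviest is sketched with a mechanism that fails. Since $\psi\le 2p_0\log|\cdot-z_0|+O(1)$ near $z_0$ (Lelong number $2p_0$) and $\psi<0$, Lemma \ref{l:green-sup} gives $w:=\psi-2p_0G_D(\cdot,z_0)\le0$, and $w$ extends subharmonically across $z_0$; the maximum principle then yields only the dichotomy $w\equiv0$ or $w<0$ everywhere. Hypothesis $(a)$ gives merely $w(z_0)>-\infty$, which is perfectly compatible with $w<0$ throughout, so no maximum-principle argument can ``close the gap'': there is no interior point where $w=0$ from which to propagate. The actual mechanism in the cited papers is quantitative, not boundary-theoretic: if $\{\psi<-t\}$ strictly contains $\{2p_0G_D(\cdot,z_0)<-t\}$ on a set of positive measure, then the strict decrease of the weight encoded in the monotonicity of $c(t)e^{-t}$, played against the mass identity \eqref{eq:20210412b}, produces a strict concavity defect in $G(h^{-1}(r))$, contradicting linearity; hypotheses $(a)$/$(b)$ enter to control the singularity of $e^{-\varphi}c(-\psi)$ at $z_0$ in this comparison. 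Likewise, extracting statement $(2)$ is not ``routine substitution'': one must prove that $\varphi+\psi-2G_D(\cdot,z_0)-2\log|g|$ is harmonic --- i.e., that the Riesz measure of $\varphi+\psi$ on $D$ is carried by $z_0$ with exactly the right mass --- and that $\mathcal{F}_{z_0}=\mathcal{I}(\varphi+\psi)_{z_0}$, which in \cite{GY-concavity} falls out of an equality analysis in the underlying $L^2$ estimates and constitutes most of the work. In short, the architecture of your plan is consistent with the strategy of the references, but the two pivotal rigidity steps are missing rather than merely compressed.
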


\begin{Remark}[\cite{GY-concavity3}]\label{rem:1.1}
Assume the three statements $(1)-(3)$ in Theorem \ref{thm:m-points} hold. Let $p$ be the universal covering from unit disc $\Delta$ to $ D$. Let $f_u$ be a holomorphic function on $\Delta$ such that $|f_u|=p^*(e^u)$, and let $f_{z_0}$ be a holomorphic function on $\Delta$ such that $|f_{z_0}|=p^*(e^{G_D(\cdot,z_0)})$. Denote that $c_0:=\lim_{z\rightarrow z_0}\frac{f}{p_0gp_*(f_u)(p_*(f_{z_0}))'}$. Then 
$$c_0p_0gp_*(f_u)(p_*(f_{z_0}))'$$
 is the unique holomorphic function $F$ on $D$ such that $(F-f,z_0)\in\mathcal{F}_{z_0}$  and
	$G(t)=\int_{\{\psi<-t\}}|F|^2e^{-\varphi}c(-\psi)$ for any $t\ge0$.
\end{Remark}

\subsection{Some other required results}
Let $D$ be a planar regular region with $n$ boundary components which are analytic Jordan curves, and let $z_0\in D$.	
	
\begin{Lemma}
	[see \cite{S-O69}, see also \cite{Tsuji}]\label{l:green-sup}$G_{D}(z,z_0)=\sup_{v\in\Delta_{D}^*(z_0)}v(z)$, where $\Delta_{D}^*(z_0)$ is the set of negative subharmonic function on $D$ such that $v(z)-\log|z-z_0|$ has a locally finite upper bound near $z_0$. Moreover, $G_{D}(z,z_0)-\log|z-z_0|$ is harmonic on $D$.
\end{Lemma}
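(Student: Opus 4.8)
The plan is to establish the two assertions separately, relying on the classical description of $G_D(\cdot,z_0)$: it is harmonic on $D\setminus\{z_0\}$, negative on $D$, behaves like $\log|z-z_0|$ plus a bounded term near $z_0$, and vanishes on $\partial D$. The last property uses that $\partial D$ consists of analytic Jordan curves, so every boundary point is regular and $G_D(z,z_0)\to 0$ as $z\to\partial D$.

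For the supremum characterization I would prove two inequalities. First, $G_D(\cdot,z_0)$ itself belongs to $\Delta_D^*(z_0)$: it is negative and subharmonic on $D$, and since $G_D(z,z_0)-\log|z-z_0|$ stays bounded near $z_0$ it certainly has a finite upper bound there; hence $\sup_{v\in\Delta_D^*(z_0)}v\ge G_D(\cdot,z_0)$. For the reverse inequality I would fix any $v\in\Delta_D^*(z_0)$ and set $w:=v-G_D(\cdot,z_0)$ on $D\setminus\{z_0\}$. As $G_D(\cdot,z_0)$ is harmonic there, $w$ is subharmonic; and writing $w=(v-\log|z-z_0|)-(G_D(\cdot,z_0)-\log|z-z_0|)$ shows that $w$ is bounded above near $z_0$, since the first bracket is bounded above by the definition of $\Delta_D^*(z_0)$ and the second is bounded. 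The removable-singularity theorem for subharmonic functions that are bounded above near an isolated point then extends $w$ subharmonically across $z_0$. Because $v<0$ and $G_D(z,z_0)\to 0$ at $\partial D$, we have $\limsup_{z\to\zeta}w(z)\le 0$ for every $\zeta\in\partial D$, so the maximum principle gives $w\le 0$, i.e. $v\le G_D(\cdot,z_0)$ on $D$. Taking the supremum over $v$ yields $\sup_{v\in\Delta_D^*(z_0)}v\le G_D(\cdot,z_0)$, and the two inequalities combine to the asserted equality.

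For the ``moreover'' part, the function $G_D(\cdot,z_0)-\log|z-z_0|$ is harmonic on $D\setminus\{z_0\}$ as a difference of functions harmonic there, and it is bounded in a punctured neighborhood of $z_0$; by the removable-singularity theorem for bounded harmonic functions it extends harmonically across $z_0$, hence is harmonic on all of $D$. The only genuinely delicate points are the two removable-singularity steps and the appeal to boundary regularity needed to assert $G_D(z,z_0)\to 0$ on $\partial D$, so that the maximum principle is applied with the correct boundary values; given the analyticity of $\partial D$ these are standard, and the comparison argument above is then routine.
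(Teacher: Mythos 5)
The paper offers no proof of this lemma: it is quoted verbatim as a classical fact with references to Sario--Oikawa and Tsuji, so there is nothing internal to compare against. Your argument is the standard one and is correct: taking the defining properties of $G_D(\cdot,z_0)$ (harmonic off $z_0$, logarithmic singularity, zero boundary values, the latter justified by regularity of the analytic boundary), you get ``$\geq$'' from membership of $G_D(\cdot,z_0)$ in $\Delta_D^*(z_0)$ and ``$\leq$'' from the comparison $w=v-G_D(\cdot,z_0)$, the removable-singularity theorem for subharmonic functions bounded above near an isolated point, and the maximum principle. The only presentational remark is that subharmonicity of $G_D(\cdot,z_0)$ on all of $D$ (needed for the membership claim, including at $z_0$ where the value is $-\infty$) is cleanest if you prove the ``moreover'' decomposition $G_D(\cdot,z_0)=\log|z-z_0|+h$ with $h$ harmonic first and then cite it; as written you assume this as part of the ``classical description'' and reprove it at the end, which is harmless but slightly circular in ordering.
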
	

The following two properties of the weighted Szeg\"o kernel can be referred to \cite{nehari}.

\begin{Lemma}[\cite{nehari}]
Let $\lambda$ be a positive continuous function on $\partial D$.
	\label{l:szego1}There exists an analytic function $K_{\lambda}(z,\overline w)$ with the following properties: $K_{\lambda}(z,\overline w)$ is holomorphic on $D\times D$; $|K_{\lambda}(z,\overline w)|$ is continuous on $\overline{D}$ for fixed $w\in D$;
	$$\int_{\partial D}f(z)\overline{K_{\lambda}(z,\overline w)}\lambda(z)|dz|=f(w)$$
	holds for any $f\in H^{(c)}_2(D)$.
\end{Lemma}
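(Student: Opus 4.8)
The plan is to recognize $K_{\lambda}(\cdot,\overline w)$ as the reproducing kernel of the Hardy space $H^{(c)}_2(D)$ under a weighted boundary inner product, and then to establish the boundary regularity separately using the analyticity of $\partial D$. First I would put on $H^{(c)}_2(D)$ the Hermitian inner product
\[
\langle f,g\rangle_{\lambda}:=\int_{\partial D}f(z)\overline{g(z)}\lambda(z)\,|dz|,
\]
where each $f\in H^{(c)}_2(D)$ is identified with its nontangential boundary values in $L^2(\partial D)$. Since $\partial D$ is compact and $\lambda$ is positive and continuous, there are constants $0<c_1\le c_2<+\infty$ with $c_1\le\lambda\le c_2$, so the norm $\|\cdot\|_{\lambda}$ is equivalent to that of $L^2(\partial D)$. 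As the boundary values of $H^{(c)}_2(D)$ form a closed subspace of $L^2(\partial D)$, the space $(H^{(c)}_2(D),\langle\cdot,\cdot\rangle_{\lambda})$ is a Hilbert space.

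Next I would verify that, for each fixed $w\in D$, the evaluation functional $f\mapsto f(w)$ is bounded. For $f\in H^{(c)}_2(D)$ the subharmonic function $|f|^2$ has a least harmonic majorant $U$ whose boundary values equal $|f|^2$ a.e., and the Green (harmonic measure) representation gives
\[
|f(w)|^2\le U(w)=\frac{1}{2\pi}\int_{\partial D}|f(z)|^2\,\frac{\partial G_D(z,w)}{\partial v_z}\,|dz|.
\]
Since $\frac{\partial G_D(z,w)}{\partial v_z}$ is positive and continuous on $\partial D$ for fixed $w$ and $\lambda\ge c_1$, this yields $|f(w)|^2\le C_w\|f\|_{\lambda}^2$ for a constant $C_w$ depending only on $w$. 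By the Riesz representation theorem there is then a unique $K_w\in H^{(c)}_2(D)$ with $f(w)=\langle f,K_w\rangle_{\lambda}$ for all $f$; setting $K_{\lambda}(z,\overline w):=K_w(z)$ gives exactly the reproducing identity.

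The holomorphy assertions follow formally. Holomorphy in $z$ is immediate, since $K_{\lambda}(\cdot,\overline w)=K_w\in H^{(c)}_2(D)\subset\mathcal O(D)$. Applying the reproducing identity to $f=K_z$ gives the Hermitian symmetry $K_{\lambda}(z,\overline w)=\langle K_w,K_z\rangle_{\lambda}=\overline{\langle K_z,K_w\rangle_{\lambda}}=\overline{K_{\lambda}(w,\overline z)}$, so for fixed $z$ the map $w\mapsto K_{\lambda}(z,\overline w)$ is antiholomorphic; combining separate holomorphy in $z$ and in $\overline w$ with the uniform local bound $|K_{\lambda}(z,\overline w)|\le\sqrt{C_zC_w}$ (from Cauchy--Schwarz, using $\|K_w\|_{\lambda}\le\sqrt{C_w}$) yields joint holomorphy on $D\times D$.

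The main obstacle is the remaining claim that $|K_{\lambda}(\cdot,\overline w)|$ extends continuously to $\overline D$ for fixed $w$: membership in $H^{(c)}_2(D)$ only guarantees $L^2$ boundary values a priori, so this is a genuine boundary-regularity statement rather than a Hilbert-space formality. Here I would use that the boundary components are analytic Jordan curves, across which $G_D$ (hence the Poisson--Szeg\"o kernel) extends analytically, together with the continuity of $\lambda$. Following Nehari, $K_{\lambda}(\cdot,\overline w)$ can be characterized through a boundary integral equation whose kernel is continuous, and analytic reflection across $\partial D$ propagates the regularity of the solution up to $\overline D$; this is precisely the step where the hypotheses ``$\partial D$ analytic'' and ``$\lambda\in C(\partial D)$'' enter essentially, and where the real work lies.
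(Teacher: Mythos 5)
The paper does not prove this lemma at all: it is quoted verbatim as a known result of Nehari \cite{nehari}, so there is no internal argument to compare yours against. Judged on its own terms, the first three quarters of your proposal are correct and standard: the weighted boundary inner product, the closedness of the boundary-value subspace in $L^2(\partial D)$, the bound $|f(w)|^2\le\frac{1}{2\pi}\int_{\partial D}|f|^2\frac{\partial G_D(z,w)}{\partial v_z}|dz|$ via the least harmonic majorant, the Riesz representation producing $K_w$, and the passage from separate holomorphy/antiholomorphy plus the local bound $|K_\lambda(z,\overline w)|\le\sqrt{C_zC_w}$ to joint holomorphy on $D\times D$ are all sound (you should still say a word about why a function of $H^{(c)}_2(D)$ with vanishing boundary values is identically zero, so that the boundary identification is injective).

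The genuine gap is exactly where you flag "the real work": the continuity of $|K_{\lambda}(\cdot,\overline w)|$ on $\overline{D}$. You do not prove it; you only assert that Nehari characterizes $K_\lambda(\cdot,\overline w)$ by "a boundary integral equation whose kernel is continuous" and that "analytic reflection across $\partial D$ propagates the regularity." Neither claim is substantiated, and the second is doubtful as stated: analytic reflection requires analytic data on the boundary, whereas here $\lambda$ is only assumed continuous, so the reflected object is not obviously analytic (or even continuous) across $\partial D$. For the unweighted Szeg\H{o} kernel on a domain with analytic boundary the extension to $\overline D$ is classical, but for a general continuous weight one must actually run Nehari's construction (e.g. via an orthonormal system and uniform convergence of $\sum f_n(z)\overline{f_n(w)}$ up to the boundary, or via the extremal characterization) to get the stated boundary continuity. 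Since this is precisely the property the paper later uses (to conclude $|M|\in C(\overline D)$ and pass to the limit on $\partial D$ in Step 3 of the main proof), your argument as written does not yet establish the lemma; it establishes existence, holomorphy, and the reproducing identity, and defers the remaining claim to the very reference the lemma is quoted from.
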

	
	\begin{Lemma}[\cite{nehari}]\label{l:szego2}
		Let $\lambda$ be a positive continuous function on $\partial D$, and let $f\in H^{(c)}_2(D)$ satisfying $f(z_0)=1$. Then we have
		\begin{equation}\label{eq:220710b}
			\int_{\partial D}|M(z)|^2\lambda(z)|dz|\le \int_{\partial D}|f(z)|^2\lambda(z)|dz|,
		\end{equation}
		where $M(z):=\frac{K_{\lambda}(z,\overline{z_0})}{K_{\lambda}(z_0,\overline{z_0})}$.
		Equality in \eqref{eq:220710b} holds if and only if $f(z)\equiv M(z)$.
	\end{Lemma}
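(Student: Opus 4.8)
The plan is to recognize inequality \eqref{eq:220710b} as the standard extremal characterization of a reproducing kernel, and to prove it by an orthogonality (Pythagorean) decomposition in $H^{(c)}_2(D)$ equipped with the weighted boundary inner product. First I would introduce the sesquilinear form
$$\langle g,h\rangle_{\lambda}:=\int_{\partial D}g(z)\overline{h(z)}\lambda(z)|dz|,$$
defined for $g,h\in H^{(c)}_2(D)$ through their Fatou nontangential boundary values in $L^2(\partial D)$ (recalled in the introduction), and rewrite the two sides of \eqref{eq:220710b} as $\langle M,M\rangle_\lambda$ and $\langle f,f\rangle_\lambda$. The only structural input needed is the reproducing identity of Lemma \ref{l:szego1}, which in this notation reads $\langle g,K_{\lambda}(\cdot,\overline w)\rangle_\lambda=g(w)$ for every $g\in H^{(c)}_2(D)$ and every $w\in D$.

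The key step is to show that $f-M$ is orthogonal to $M$. Since $K_{\lambda}(\cdot,\overline{z_0})$ is holomorphic on $D$ with modulus continuous on $\overline D$ (Lemma \ref{l:szego1}), it lies in $H^{(c)}_2(D)$, and taking $g=K_{\lambda}(\cdot,\overline{z_0})$, $w=z_0$ in the reproducing identity gives $\langle K_{\lambda}(\cdot,\overline{z_0}),K_{\lambda}(\cdot,\overline{z_0})\rangle_\lambda=K_{\lambda}(z_0,\overline{z_0})$, so $K_{\lambda}(z_0,\overline{z_0})\ge0$; it is in fact strictly positive, since otherwise $K_{\lambda}(\cdot,\overline{z_0})\equiv0$ and reproduction would fail. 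Hence $M=K_{\lambda}(\cdot,\overline{z_0})/K_{\lambda}(z_0,\overline{z_0})$ is well defined, belongs to $H^{(c)}_2(D)$, and satisfies $M(z_0)=1$, so it is an admissible competitor. Applying the reproducing identity at $w=z_0$ to $g=f-M\in H^{(c)}_2(D)$ yields
$$\langle f-M,K_{\lambda}(\cdot,\overline{z_0})\rangle_\lambda=(f-M)(z_0)=f(z_0)-M(z_0)=0,$$
and dividing by the positive real number $K_{\lambda}(z_0,\overline{z_0})$ gives $\langle f-M,M\rangle_\lambda=0$.

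With orthogonality in hand I would expand
$$\langle f,f\rangle_\lambda=\langle M+(f-M),M+(f-M)\rangle_\lambda=\langle M,M\rangle_\lambda+2\,\mathrm{Re}\,\langle f-M,M\rangle_\lambda+\langle f-M,f-M\rangle_\lambda,$$
so that $\langle f,f\rangle_\lambda=\langle M,M\rangle_\lambda+\langle f-M,f-M\rangle_\lambda\ge\langle M,M\rangle_\lambda$, which is precisely inequality \eqref{eq:220710b}. Equality forces $\langle f-M,f-M\rangle_\lambda=0$; since $\lambda$ is positive and continuous this means $f-M=0$ a.e. on $\partial D$, and then the reproducing identity gives $(f-M)(w)=\langle f-M,K_{\lambda}(\cdot,\overline w)\rangle_\lambda=0$ for every $w\in D$, i.e. $f\equiv M$ on $D$. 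Conversely $f\equiv M$ trivially yields equality, establishing the equality characterization.

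The only point requiring care — the main (and mild) obstacle — is justifying that the boundary pairing $\langle\cdot,\cdot\rangle_\lambda$ is a genuine inner product on $H^{(c)}_2(D)$ and that the reproducing identity of Lemma \ref{l:szego1} holds for \emph{all} of $H^{(c)}_2(D)$ rather than a dense or generating subclass. Both rest entirely on the Fatou boundary-value theory recalled in the introduction, namely that each $f\in H^{(c)}_2(D)$ admits nontangential boundary values lying in $L^2(\partial D)$; granted this, no additional analytic machinery is needed, and the argument is the classical Hilbert-space minimization for reproducing kernels.
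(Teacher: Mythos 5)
Your argument is correct: the paper does not prove Lemma \ref{l:szego2} itself (it is quoted from \cite{nehari}), and your orthogonality/Pythagoras argument — showing $f-M$ is orthogonal to $K_{\lambda}(\cdot,\overline{z_0})$ via the reproducing identity and expanding $\langle f,f\rangle_{\lambda}=\langle M,M\rangle_{\lambda}+\langle f-M,f-M\rangle_{\lambda}$ — is exactly the classical extremal characterization that underlies Nehari's result. The supporting details (that $K_{\lambda}(\cdot,\overline{z_0})\in H^{(c)}_2(D)$ since its modulus is continuous on $\overline{D}$, that $K_{\lambda}(z_0,\overline{z_0})>0$, and that the equality case propagates from the boundary to $D$ via the reproducing formula) are all handled adequately.
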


\section{Proofs of Theorem \ref{main theorem} and Remark \ref{rem:function}}\label{sec:proof1}
In this section, we prove 
Theorem \ref{main theorem} and Remark \ref{rem:function}.

\begin{proof}[Proof of Theorem \ref{main theorem}]

We prove Theorem \ref{main theorem} in three steps: Firstly we prove that ``$\geq$" holds; secondly we prove the necessity of the characterization; finally we prove the sufficiency of the characterization.

\

\emph{Step 1:} 
Denote \begin{equation*}
\begin{split}
\inf\bigg\{\int_{\{2\psi<-t\}}|\tilde{f}|^{2}e^{-\varphi}c(-2\psi):\tilde{f}(z_0)=1\,\&{\,}\tilde{f}\in \mathcal{O} (\{2\psi<-t\})\bigg\},
\end{split}
\end{equation*}
by $G(t)$ for $t\ge0$, then we have 
$$G(0)=\frac{1}{B_{\rho}(z_0)},$$ where $\rho=e^{-\varphi}c(-2\psi)$.
 Lemma \ref{l:unique} tells us that there exists a holomorphic function $F_0$ on $D$ such that $F_0(z_0)=1$ and $G(0)=\int_{D}|F_0|^2e^{-\varphi}c(-2\psi)$. Theorem \ref{thm:general_concave} shows that $G(h^{-1}(r))$ is concave, where $h(t)=\int_t^{+\infty}c(s)e^{-s}ds$. Note that 
 $$G(-\log r)\le \int_{\{2\psi<\log r\}}|F_0|^2e^{-\varphi}c(-2\psi)$$
 for $r\in(0,1]$,
then we have 
\begin{equation}
	\label{eq:0709d}\frac{\int_{\{z\in D:2\psi(z)\ge\log r\}}|F_0(z)|^2e^{-\varphi}c(-2\psi)}{\int_0^{-\log r}c(t)e^{-t}dt}\le \frac{G(0)-G(-\log r)}{\int_0^{-\log r}c(t)e^{-t}dt}\le \frac{G(0)}{\int_0^{+\infty}c(t)e^{-t}dt}.
\end{equation}
There exists $r_0\in(0,1)$ such that $\inf\{e^{-\varphi(z)}c(-\psi(z)):z\in D\,\&\,2G_{D}(z,z_0)\ge\log r_0\}>0$.
As $v(dd^c\psi,z_0)>0$, it follows from Lemma \ref{l:green-sup} that there exists $r_1\in(0,1)$ such that $\{z\in D:2\psi(z)\ge\log r_1\}\subset\{z\in D:2G_D(z,z_0)\ge\log r_0\}$. Note that $\lim_{t\rightarrow0+0}c(t)=1$. Then inequality \eqref{eq:0709d} implies that 
\begin{displaymath}
	\begin{split}
		&\liminf_{r\rightarrow1-0}\frac{\int_{\{z\in D:2\psi(z)\ge\log r\}}|F_0(z)|^2}{1-r}\\
		\le& C_1\liminf_{r\rightarrow1-0}\frac{\int_{\{z\in D:2\psi(z)\ge\log r\}}|F_0(z)|^2e^{-\varphi}c(-2\psi)}{\int_0^{-\log r}c(t)e^{-t}dt}\times\frac{\int_0^{-\log r}c(t)e^{-t}dt}{1-r}\\
		\le &C_1\frac{G(0)}{\int_0^{+\infty}c(t)e^{-t}dt}\liminf_{r\rightarrow1-0}\frac{\int_0^{-\log r}c(t)e^{-t}dt}{1-r}\\
		<&+\infty.
	\end{split}
\end{displaymath}
Using Lemma \ref{l:2}, we have $F_0\in H_2^{(c)}(D)$. 

Note that $F_0$ has Fatou's nontangential boundary value and  $|F_0|\in L^2(\partial D)$.
It follows from Fatou's Lemma, Lemma \ref{l:4} and Lemma \ref{l:3} that 
\begin{equation}
	\label{eq:0709e}\begin{split}
		&\int_{\partial D}|F_0|^2e^{-\varphi}c(-2\psi)\left(\frac{\partial\psi}{\partial v_z}\right)^{-1}|dz|\\
		=&\int_{\partial D}|F_0|^2e^{-\varphi}c(-2\psi)\left|\bigtriangledown\psi\right|^{-1}|dz|\\
		\le&\liminf_{r\rightarrow1-0}\frac{\int_{\frac{1}{2}\log r}^0\left(\int_{\{z\in D:\psi(z)=s\}}|F_0|^2e^{-\varphi}c(-2\psi)\left|\bigtriangledown\psi\right|^{-1}|dz|\right)ds}{-\frac{1}{2}\log r}\\
		=&\liminf_{r\rightarrow1-0}\frac{\int_{\{z\in D:2\psi(z)\ge\log r\}}|F_0|^2e^{-\varphi}c(-2\psi)}{\int_0^{-\log r}c(t)e^{-t}dt}\times\frac{\int_0^{-\log r}c(t)e^{-t}dt}{-\frac{1}{2}\log r}\\
		=&2\liminf_{r\rightarrow1-0}\frac{\int_{\{z\in D:2\psi(z)\ge\log r\}}|F_0|^2e^{-\varphi}c(-2\psi)}{\int_0^{-\log r}c(t)e^{-t}dt}.
	\end{split}
\end{equation}
As $F_0\in H_2^{(c)}(D)$, we have $1=F_0(z_0)=\frac{1}{2\pi}\int_{\partial D}F_0(z)\overline{K_{\rho\left(\frac{\partial\psi}{\partial v_z}\right)^{-1}}(z,\overline{z_0})}\rho\left(\frac{\partial\psi}{\partial v_z}\right)^{-1}|dz|$. By Cauchy-Schwarz inequality, it follows that
\begin{equation}
	\label{eq:0709f}\begin{split}
		1&\le \frac{1}{(2\pi)^2}\left(\int_{\partial D}|F_0|^2\rho\left(\frac{\partial\psi}{\partial v_z}\right)^{-1}|dz|\right)\times\left(\int_{\partial D}|K_{\rho\left(\frac{\partial\psi}{\partial v_z}\right)^{-1}}(z,\overline{z_0})|^2\rho\left(\frac{\partial\psi}{\partial v_z}\right)^{-1}|dz|\right)\\
		&=\frac{1}{2\pi}\left(\int_{\partial D}|F_0|^2\rho\left(\frac{\partial\psi}{\partial v_z}\right)^{-1}|dz|\right)\times K_{\rho,\psi}(z_0).
	\end{split}
\end{equation}
Combining inequality \eqref{eq:0709d}, inequality \eqref{eq:0709e} and inequality \eqref{eq:0709f}, we obtain that
\begin{equation}
	\label{eq:0709g}
	\begin{split}
		\left(\int_0^{+\infty}c(t)e^{-t}dt\right)B_{\rho}(z_0)&=\frac{\int_0^{+\infty}c(t)e^{-t}dt}{G(0)}\\
		&\leq\limsup_{r\rightarrow 1-0}\frac{\int_0^{-\log r}c(t)e^{-t}dt}{\int_{\{z\in D:2\psi(z)\ge\log r\}}|F_0|^2e^{-\varphi}c(-2\psi)}\\
		&\le 2\left(\int_{\partial D}|F_0|^2e^{-\varphi}c(-2\psi)\left(\frac{\partial\psi}{\partial v_z}\right)^{-1}|dz|\right)^{-1}\\
		&\le \frac{1}{\pi} K_{\rho,\psi}(z_0).
	\end{split}
\end{equation}
Thus, we have proved the inequality part of Theorem \ref{main theorem}.

\

\emph{Step 2:} Assume that the equality
\begin{equation}
	\label{eq:0710c}K_{\rho,\psi}(z_0)=\left(\int_0^{+\infty}c(t)e^{-t}dt\right)\pi B_{\rho}(z_0)
\end{equation}
holds. Then inequality \eqref{eq:0709g} becomes an equality, which shows that 
\begin{equation*}
	\frac{\int_0^{+\infty}c(t)e^{-t}dt}{G(0)}=\limsup_{r\rightarrow 1-0}\frac{\int_0^{-\log r}c(t)e^{-t}dt}{\int_{\{z\in D:2\psi(z)\ge\log r\}}|F_0|^2e^{-\varphi}c(-2\psi)}.
\end{equation*}
Following from the concavity of $G(h^{-1}(r))$, we obtain that $G(h^{-1}(r))$ is linear with respect to $r\in(0,\int_0^{+\infty}c(t)e^{-t}dt)$. Theorem \ref{thm:m-points} shows that the following the following statements hold:
	
	$(1)$ $\psi=p_0G_{D}(\cdot,z_0)$, where $p_0=v(dd^c(\psi),z_0)>0$;
	
	$(2)$ $\varphi+2\psi=2\log|g|+2G_{D}(\cdot,z_0)+2u_1$, where $g$ is a holomorphic function on $D$ such that $ord_{z_0}(g)=0$ and $u_1$ is a harmonic function on $D$;
	
	$(3)$ $\chi_{z_0}=\chi_{-u_1}$.

In the following, we will prove that $2\log|g|$ is harmonic on $D$, a.e., $g(z)\not=0$ holds for any $z\in D$.
 
Denote that $h:=\varphi+2\psi-2G_{D}(\cdot,z_0)$ is a function on $\overline D$, thus $h$ is subharmonic on $D$ and $h$ is continuous at $z$ for any $z\in\partial D$.
 By the analyticity of $\partial D$, there exists $\tilde h\in C(\overline D)$ such that $\tilde h|_{\partial D}=h|_{\partial D}$ and $\tilde h$ is harmonic on $D$. As $h$ is subharmonic on $D$, we have 
 $$h\le \tilde h$$
  on $D$. Denote that
  $$\tilde\varphi:=\varphi+\tilde h-h.$$ Then we have $\tilde\varphi|_{\partial D}=\varphi|_{\partial D}$ and $\tilde \varphi+\psi=2G_D(\cdot,z_0)+\tilde h$. Denote that $\tilde\rho:=e^{-\tilde\varphi}c(-2\psi)$. It is clear that 
 \begin{displaymath}
 	K_{\tilde\rho,\psi}(z_0)=K_{\rho,\psi}(z_0)\,\,\text{and}\,\,B_{\tilde\rho}(z_0)\ge B_{\rho}(z_0).
 \end{displaymath}
Following equality \eqref{eq:0710c} and the result in Step 1, we have 
$$\frac{K_{\rho,\psi}(z_0)}{\int_0^{+\infty}c(t)e^{-t}dt}=\pi B_{\rho}(z_0)\le \pi B_{\tilde\rho}(z_0)\le \frac{K_{\tilde\rho,\psi}(z_0)}{\int_0^{+\infty}c(t)e^{-t}dt}=\frac{K_{\rho,\psi}(z_0)}{\int_0^{+\infty}c(t)e^{-t}dt},$$
which implies that 
$$B_{\rho}(z_0)=B_{\tilde\rho}(z_0).$$ Then we have $\tilde\rho=\rho$, a.e., $\tilde h=h$, which implies that $2\log|g|$ is harmonic on $D$. Denote that 
$$u=\log|g|+u_1$$
 is a harmonic function on $D$. Then we have $\varphi+2\psi=2G_{D}(\cdot,z_0)+2u$ and $\chi_{z_0}=\chi_{-u_1}=\chi_{-u}$.

\

\emph{Step 3:} Assume that the three statements $(1)-(3)$ hold.

It follows from Theorem \ref{thm:m-points} that $G(h^{-1}(r))$ is linear with respect to $r\in(0,\int_0^{+\infty}c(t)e^{-t}dt)$. By Corollary \ref{c:linear} and Remark \ref{rem:1.1}, we get that 
\begin{equation}
	\label{eq:220710a}
	G(t)=\int_{\{2\psi<-t\}}|F_0|^2e^{-\varphi}c(-2\psi)
	\end{equation}
holds for any $t\geq0$ and 
$$F_0=c_0(p_*(f_{z_0}))'p_*(f_u),$$
 where $c_0$ is a constant, $p$ is the universal covering from unit disc $\Delta$ to $ D$, $f_u$ is a holomorphic function on $\Delta$ such that $|f_u|=p^*(e^u)$, and $f_{z_0}$ is a holomorphic function on $\Delta$ such that $|f_{z_0}|=p^*(e^{G_D(\cdot,z_0)})$. It follows from equality \eqref{eq:220710a} that 
 \begin{equation}
 	\label{eq:220710f}\frac{\int_0^{+\infty}c(t)e^{-t}dt}{G(0)}=\limsup_{r\rightarrow 1-0}\frac{\int_0^{-\log r}c(t)e^{-t}dt}{\int_{\{z\in D:2\psi(z)\ge\log r\}}|F_0|^2e^{-\varphi}c(-2\psi)}.
 \end{equation}

As $u=\frac{\varphi}{2}+\psi-G_D(\cdot,z_0)$, we have $u\in C(\overline D)$, which implies that $p_*(|f_u|)\in C(\overline D)$. As $G_{D}(\cdot,z_0)$ can be extended to a harmonic function on a $U\backslash\{z_0\}$, where $U$ is a neighborhood of $\overline D$, we have $|(p_*(f_{z_0}))'|\in C(\overline D)$. Thus, we have 
$$|F_0|\in C(\overline D).$$ Following from the dominated convergence theorem and Lemma \ref{l:3}, we obtain that
\begin{equation}
	\label{eq:220710g}\limsup_{r\rightarrow 1-0}\frac{\int_{\{z\in D:2\psi(z)\ge\log r\}}|F_0|^2e^{-\varphi}c(-2\psi)}{\int_0^{-\log r}c(t)e^{-t}dt}=\frac{1}{2}\int_{\partial D}|F_0|^2e^{-\varphi}c(-2\psi)\left(\frac{\partial\psi}{\partial v_z}\right)^{-1}|dz|.
\end{equation}

Denote that $M(z):=\frac{K_{\lambda}(z,\overline{z_0})}{K_{\lambda}(z_0,\overline{z_0})}$, where $\lambda=\rho\left(\frac{\partial\psi}{\partial v_z} \right)^{-1}$. Note that $\int_D|F_0|^2e^{-\varphi}c(-2\psi)<+\infty$ implies that $\int_De^{-\varphi}c(-2\psi)<+\infty$. Lemma \ref{l:szego1} shows that $|M(z)|\in C(\overline D)$, then we have 
\begin{equation*}
	\int_{D}|M|^2e^{-\varphi}c(-2\psi)<+\infty.
\end{equation*}
Note that $M(z_0)=1$. By using Lemma \ref{l:unique} and inequality \eqref{eq:220710a}, we have 
\begin{displaymath}\begin{split}
	\int_{\{2\psi<-t\}}|M|^2e^{-\varphi}c(-2\psi)=&\int_{\{2\psi<-t\}}|F_0|^2e^{-\varphi}c(-2\psi)\\
	&+\int_{\{2\psi<-t\}}|M-F_0|^2e^{-\varphi}c(-2\psi),
\end{split}
\end{displaymath}
which implies that 
\begin{equation}
	\label{eq:220710d}\int_{\{2\psi<-t\}}F_0\overline{F_0-M} e^{-\varphi}c(-2\psi)=0
\end{equation}
holds for any $t\ge0$. Note that $\psi=p_0G_D(\cdot,z_0)$. It follows from Lemma \ref{l:3} and equality \eqref{eq:220710d} that there exists $r_1>0$ such that 
\begin{equation}
	\label{eq:220710c}\int_{\{z\in D:G_D(z,z_0)=r\}}F_0\overline{F_0-M} e^{-\varphi}\left(\frac{\partial G_D(\cdot,z_0)}{\partial v_z} \right)^{-1}|dz|=0
\end{equation}
holds for any $r\in(0,r_1)$. Note that $|F_0|\in C(\overline D)$ and $|M|\in C(\overline D)$, then it follows from the dominated convergence theorem and equality \eqref{eq:220710c} that 
\begin{displaymath}
	\int_{\partial D}F_0\overline{F_0-M} e^{-\varphi}\left(\frac{\partial G_D(\cdot,z_0)}{\partial v_z} \right)^{-1}|dz|=0,
\end{displaymath}
which implies that 
\begin{equation}\nonumber
\begin{split}
		&\int_{\partial D}|M|^2 e^{-\varphi}\left(\frac{\partial G_D(\cdot,z_0)}{\partial v_z} \right)^{-1}|dz|\\
		=&\int_{\partial D}|M-F_0|^2 e^{-\varphi}\left(\frac{\partial G_D(\cdot,z_0)}{\partial v_z} \right)^{-1}|dz|+\int_{\partial D}|F_0|^2 e^{-\varphi}\left(\frac{\partial G_D(\cdot,z_0)}{\partial v_z} \right)^{-1}|dz|.
	\end{split}
\end{equation}
Lemma \ref{l:szego2} tells us that 
$$\int_{\partial D}|M|^2 e^{-\varphi}\left(\frac{\partial G_D(\cdot,z_0)}{\partial v_z} \right)^{-1}|dz|\le \int_{\partial D}|F_0|^2 e^{-\varphi}\left(\frac{\partial G_D(\cdot,z_0)}{\partial v_z} \right)^{-1}|dz|.$$
Then we have 
$$\int_{\partial D}|M|^2 e^{-\varphi}\left(\frac{\partial G_D(\cdot,z_0)}{\partial v_z} \right)^{-1}|dz|= \int_{\partial D}|F_0|^2 e^{-\varphi}\left(\frac{\partial G_D(\cdot,z_0)}{\partial v_z} \right)^{-1}|dz|.$$
It follows from Lemma \ref{l:szego2} that 
\begin{equation}
	\label{eq:0711c}F_0\equiv M.
\end{equation}
 Thus, inequality \eqref{eq:0709f} becomes equality, i.e.
\begin{equation}
	\label{eq:220710e}1=\frac{1}{2\pi}\left(\int_{\partial D}|F_0|^2\rho\left(\frac{\partial\psi}{\partial v_z}\right)^{-1}|dz|\right)\times K_{\rho,\psi}(z_0).
\end{equation}
Combining equality \eqref{eq:220710f}, equality \eqref{eq:220710g} and equality \eqref{eq:220710e}, we konw that  inequality \eqref{eq:0709g} becomes equality, i.e.
$$\left(\int_0^{+\infty}c(t)e^{-t}dt\right)B_{\rho}(z_0)= \frac{1}{\pi} K_{\rho,\psi}(z_0).$$

Then Theorem \ref{main theorem} has been proved.
\end{proof}

\begin{proof}
	[Proof of Remark \ref{rem:function}]Assume that the three statements $(1)-(3)$ in Theorem \ref{main theorem} hold. Following the discussions in Step 3 in the proof of Theorem \ref{main theorem}, we obtain that 
	\begin{displaymath}
		F_0=c_0(p_*(f_{z_0}))'p_*(f_u),\,\,F_0\equiv M\,\,\text{and}\,\,M(z)=\frac{K_{\lambda}(z,\overline{z_0})}{K_{\lambda}(z_0,\overline{z_0})},
	\end{displaymath} where $\lambda=\rho\left(\frac{\partial\psi}{\partial v_z} \right)^{-1}$.	
	Thus, we have 
	$$K_{\rho,\psi}(\cdot,\overline{z_0})=K_{\rho,\psi}(z_0,\overline{z_0})F_0=c_1(p_*(f_{z_0}))'p_*(f_u),$$ where $c_1$ is a constant. As 
	$$\int_D\left|\frac{B_{\rho}(\cdot,\overline{z_0})}{B_{\rho}(z_0,\overline{z_0})}\right|^2\rho=\frac{1}{B_{\rho}(z_0,\overline{z_0})}=G(0),$$ it follows from Lemma \ref{l:unique} that 
	$$\frac{B_{\rho}(\cdot,\overline{z_0})}{B_{\rho}(z_0,\overline{z_0})}=F_0.$$ Theorem \ref{main theorem} shows that 
	$K_{\rho,\psi}(z_0,\overline{z_0})=\left(\int_0^{+\infty}c(t)e^{-t}dt\right)\pi B_{\rho}(z_0,\overline{z_0}),$
	thus we obtain
	$$K_{\rho,\psi}(\cdot,\overline{z_0})=\left(\int_0^{+\infty}c(t)e^{-t}dt\right)\pi B_{\rho}(\cdot,\overline{z_0}).$$
	\end{proof}

\section{Proof of Corollary \ref{c:higher}}

In this section, we prove Corollary \ref{c:higher} by using Theorem \ref{main theorem}.

Let $\tilde\varphi=\varphi-2k\log|z-z_0|$, then it is clear that $\tilde\varphi+2\psi$ is subharmonic on $D$ and $v(dd^c(\tilde\varphi+2\psi),z_0)\ge2$. Denote that $\tilde\rho:=e^{-\tilde\varphi}c(-2\psi)=|z-z_0|^{2k}\rho$. 
Note that 
\begin{displaymath}
	\begin{split}
		&B^{(k)}_{\rho}(z_0)\\
		=&\sup\left\{\left|\frac{f^{(k)}(z_0)}{k!}\right|^2:f\in\mathcal{O}(D),\,\,\int_D|f|^2\rho\le1\,\,\&\,\,f(z_0)=\ldots=f^{(k-1)}(z_0)=0 \right\}\\
		=&\sup\left\{|g(z_0)|^2:g\in\mathcal{O}(D)\,\,\&\,\,\int_D|g|^2\tilde\rho\le1\right\}\\
		=&B_{\tilde\rho}(z_0),
	\end{split}
\end{displaymath}
and
\begin{displaymath}
	\begin{split}
		&K_{\rho,\psi}^{(k)}(z_0)\\
		=&\sup\Bigg\{\left|\frac{f^{(k)}(z_0)}{k!}\right|^2:f\in H^{(c)}_2(D),\,\, \int_{\partial D}|f|^2\rho\left(\frac{\partial\psi}{\partial v_z} \right)^{-1}|dz|\le1\\
		&\&\,\, f(z_0)=\ldots=f^{(k-1)}(z_0)=0 \Bigg\}\\
		=&\sup\Bigg\{|g(z_0)|^2:g\in H^{(c)}_2(D)\,\,\&\,\,\int_{\partial D}|g|^2\tilde\rho\left(\frac{\partial\psi}{\partial v_z} \right)^{-1}|dz|\le1\Bigg\}\\
		=&K_{\tilde\rho,\psi}(z_0).
	\end{split}
\end{displaymath}
Theorem \ref{main theorem} tell us that 
\begin{equation}
	\label{eq:0711a}K_{\tilde\rho,\psi}(z_0)\ge \left(\int_0^{+\infty}c(t)e^{-t}dt\right)\pi B_{\tilde\rho}(z_0)
\end{equation}
holds and the equality holds if and only if the following statements holds:
	
\noindent$(1)$ $\tilde\varphi+2\psi=2G_{D}(\cdot,z_0)+2u_1$, where $u_1$ is a harmonic function on $D$;
	 
\noindent	$(2)$ $\psi=p_0G_{D}(\cdot,z_0)$, where $p_0=v(dd^c(\psi),z_0)>0$;
	
\noindent	$(3)$ $\chi_{z_0}=\chi_{-u_1}$.

\noindent Then inequality \eqref{eq:0711a} implies that 
\begin{equation}
	\label{eq:0711b}K_{\rho,\psi}^{(k)}(z_0)\ge \left(\int_0^{+\infty}c(t)e^{-t}dt\right)\pi B^{(k)}_{\rho}(z_0)
\end{equation}
holds. Let $u(z)=u_1(z)+k(\log|z-z_0|-G_D(z,z_0))$ on $D$, then it follows from Lemma \ref{l:green-sup} that $u$ is harmonic on $D$ if and only if $u_1$ is harmonic on $D$.  It is clear that $\chi_{-u}\chi_{z_0}^k=\chi_{-u_1}$ when $u$ is harmonic on $D$. Thus, the equality in \eqref{eq:0711b} holds if and only if the following statements holds:
	
\noindent$(1)$ $\varphi+2\psi=2(k+1)G_{D}(\cdot,z_0)+2u$, where $u$ is a harmonic function on $D$;
	 
\noindent	$(2)$ $\psi=p_0G_{D}(\cdot,z_0)$, where $p_0=v(dd^c(\psi),z_0)>0$;
	
\noindent	$(3)$ $\chi_{z_0}^{k+1}=\chi_{-u}$.

Thus, Corollary \ref{c:higher} holds.

\vspace{.1in} {\em Acknowledgements}. The authors would like to thank Dr. Shijie Bao, Dr. Zhitong Mi and Gan Li for checking the manuscript and  pointing out some typos. The first named author was supported by National Key R\&D Program of China 2021YFA1003103, NSFC-11825101, NSFC-11522101 and NSFC-11431013.

\bibliographystyle{references}
\bibliography{xbib}

\end{document}